\documentclass[a4paper,12pt]{article}
\usepackage[utf8]{inputenc}
\usepackage{latexsym}
\usepackage{amsmath,amsthm,amsfonts}
\usepackage{graphicx}
\usepackage[hyperfootnotes=false,colorlinks=true,pagebackref=true]{hyperref}
\usepackage{multicol}
\usepackage{longtable}
\usepackage{subfigure}
\usepackage[margin=3cm]{geometry}
\usepackage{tikz}
\usetikzlibrary{calc}
\usetikzlibrary{arrows}

\newcommand{\C}{\mathcal{C}}
\newcommand{\I}{\mathcal{I}}

\renewcommand{\P}{\mathcal{P}}

\newcommand{\B}{\mathcal{B}}
\newcommand{\ZZ}{\mathbb{Z}}

\newtheorem{theorem}{Theorem}
\newtheorem{proposition}[theorem]{Proposition}
\newtheorem{corollary}[theorem]{Corollary}
\newtheorem{lemma}[theorem]{Lemma}
\newtheorem{conjecture}{Conjecture}

\theoremstyle{definition}
\newtheorem{definition}{Definition}

\newtheorem{example}{Example}

\setcounter{MaxMatrixCols}{20}

\begin{document}

\begin{center}
{\bf\Large Splittable and unsplittable graphs\\ and configurations}

\vspace{1.5em}
{\large Nino Ba\v{s}i\'{c}} \\[0.2em]
{\small\it FAMNIT, University of Primorska, \\
Glagoljaška 8, 6000 Koper, Slovenia} \\[0.1em]
\texttt{nino.basic@famnit.upr.si}

\vspace{1.5em}
{\large Jan Gro\v{s}elj} \\[0.2em]
{\small\it Faculty of Mathematics and Physics, University of Ljubljana, \\
 Jadranska 19, 1000 Ljubljana, Slovenia} \\[0.1em]
\texttt{jan.groselj@fmf.uni-lj.si}

\vspace{1.5em}
{\large Branko Gr\"{u}nbaum} \\[0.2em]
{\small\it Department of Mathematics, University of Washington, \\
Box 354350, Seattle, WA 98195, USA} \\[0.1em]
\texttt{grunbaum@math.washington.edu} 

\vspace{1.5em}
{\large Toma\v{z} Pisanski} \\[0.2em]
{\small\it FAMNIT, University of Primorska, \\
Glagoljaška 8, 6000 Koper, Slovenia} \\[0.1em]
\texttt{tomaz.pisanski@upr.si}
\end{center}

\begin{abstract}
We prove that there exist infinitely many splittable and also infinitely many unsplittable cyclic $(n_3)$ configurations.
We also present a complete study of trivalent cyclic Haar graphs on at most 60 vertices with respect to splittability.
Finally, we show that all cyclic flag-transitive configurations with the exception of the Fano plane and the Möbius-Kantor
configuration are splittable.
\end{abstract}

{\small 
\noindent
\textbf{Keywords:}
configuration of points and lines,
unsplittable configuration,
unsplittable graph,
independent set,
Levi graph,
Gr\" unbaum graph,
splitting type,
cyclic Haar graph.
\medskip

\noindent
\textbf{Math.\ Subj.\ Class.\ (2010):} 
51A20,
05B30
}

\bigskip

%%%%%%%%%%%%%%%%%%%%%%%%%%%%%%%%%%%%%%%%%%%%%%%%%%%%%%%%%
\section{Introduction and preliminaries}\label{sec:introduction}

The idea of \emph{unsplittable configuration} was conceived in 2004 and formally introduced in the monograph \cite{G} by Gr\"{u}nbaum. Later, it was also used in \cite{PS}. In \cite{PT}, the notion was generalized to graphs. In this paper we present some constructions for splittable and unsplittable cyclic configurations. In \cite{HMP}, the notion of \emph{cyclic Haar graph} was introduced. It was shown that cyclic Haar graphs are closely related to cyclic configurations. Namely, each cyclic Haar graph of girth $6$ is a Levi graph of a cyclic combinatorial configuration; see also \cite{PR}. For the definition of the \emph{Levi graph} (also called \emph{incidence graph}) of a configuration the reader is referred to \cite{C}. The classification of configurations with respect to splittability is a purely combinatorial problem and can be interpreted purely in terms of Levi graphs.

Let $n$ be a positive integer, let $\ZZ_n$ be the cyclic group of integers modulo $n$ and let $S \subseteq \ZZ_n$ be a set, called the \emph{symbol}.  The graph $H(n,S)$ with the vertex set $\{u_i \mid i \in \ZZ_n\} \cup \{v_i \mid i \in \ZZ_n\}$ and edges joining $u_i$ to $v_{i+k}$ for each $i \in \ZZ_n$ and each $k \in S$ is called a \emph{cyclic Haar graph} over $\ZZ_n$ with symbol $S$ \cite{HMP}. In practice, we will simplify the notation by denoting $u_i$ by $i^+$ and $v_i$ by $i^-$.

\begin{definition}
\label{def:combconf}
A \emph{combinatorial $(v_k)$ configuration} is an incidence structure  $\C = (\P,\B,\I)$, where $\I \subseteq \P \times \B$, $\P \cap \B = \emptyset$ and $|\P| = |\B| = v$. The elements of $\P$ are called \emph{points}, the elements of $\B$ are called \emph{lines} and the relation $\I$ is called the \emph{incidence} relation. Furthermore, each line is incident with $k$ points, each point is incident with $k$ lines and two distinct points are incident with at most one common line, i.e.,
\begin{equation}
\{ (p_1, b_1), (p_2, b_1), (p_1, b_2), (p_2, b_2) \} \subseteq \I, p_1 \neq p_2 \implies b_1 = b_2.
\label{eq:girthCondition}
\end{equation}
\end{definition}

If $(p, b) \in \I$ then we say that the line $b$ passes through point $p$ or that the point $p$ lies on line $b$.
An element of $\P \cup \B$ is called an \emph{element of configuration} $\C$.

A combinatorial $(v_k)$ configuration
$\C = (\P, \B, \I)$ is \emph{geometrically realisable} if the elements of $\P$ can be mapped to different
points in the Euclidean plane and the elements of $\B$ can be mapped to different lines in the Euclidean plane,
such that $(p, b) \in \I$ if and only if the point that corresponds to $p$ lies on the line that corresponds to $b$. 
A geometric realisation of a combinatorial $(v_k)$ configuration is called a \emph{geometric $(v_k)$ configuration}.
Note that examples in Figures~\ref{fig:pointSplittable}, \ref{fig:lineSplittable} and \ref{fig:splittableType4} are
all geometric configurations. The Fano plane $(7_3)$ is an example of a geometrically non-realizable configuration.

An \emph{isomorphism} between configurations $(\P, \B, \I)$ and $(\P', \B', \I')$ is a pair of bijections $\psi \colon \P \to \P'$ and
$\varphi \colon \B \to \B'$, such that
\begin{equation}
(p, b) \in \I \text{ if and only if } (\psi(p), \varphi(b)) \in \I'.
\end{equation}
The configuration $\C^* = (\B, \P, \I^*)$, where $\I^* = \{(b, p) \in \B \times \P \mid (p, b) \in \I \}$, is called the
\emph{dual configuration} of $\C$.
A configuration that is isomorphic to its dual is called a \emph{self-dual} configuration.

The \emph{Levi graph} of a configuration $\C$ is the bipartite
graph on the vertex set $\P \cup \B$ having an edge between $p \in \P$ and $b \in \B$ if and only if the elements
$p$ and $b$ are incident in $\C$, i.e., if $(p,b) \in \I$. It is denoted $L(\C)$.
Condition \eqref{eq:girthCondition} in Definition~\ref{def:combconf} implies that the girth of $L(\C)$ is at least 6.
Moreover, any combinatorial $(v_k)$ configuration is completely determined by a $k$-regular bipartite graph of girth at least 6 with a given
black-and-white vertex coloring, where black vertices correspond to points and white vertices correspond to lines. Such a graph will be called a \emph{colored Levi graph}. Note that the reverse coloring determines the dual configuration $\C^* = (\B, \P, \I^*)$. Also, an isomorphism between configurations corresponds to
color-preserving isomorphism between their respective colored Levi graphs.

A configuration $\C$ is said to be \emph{connected} if its Levi graph $L(\C)$ is connected. Similarly, a configuration $\C$
is said to be \emph{$k$-connected} if its Levi graph $L(\C)$ is $k$-connected.

\begin{definition}[\cite{PS}]
A combinatorial $(v_k)$ configuration $\C$ is \emph{cyclic} if admits an automorphism of order $v$ that cyclically permutes
the points and lines, respectively.
\end{definition}

In \cite{HMP} the following was proved:
\begin{proposition}
A configuration $\C$ is cyclic if and only if its Levi graph is isomorphic to a cyclic Haar graph of girth 6.
\end{proposition}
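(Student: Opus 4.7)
The plan is to exhibit the isomorphism between $L(\C)$ and $H(v,S)$ in both directions by using the cyclic automorphism as a ``coordinate system'' on the vertex set.

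For the forward implication, suppose $\C=(\P,\B,\I)$ is a cyclic $(v_k)$ configuration with an order-$v$ automorphism $\sigma$ that cyclically permutes both $\P$ and $\B$. I would fix an arbitrary base point $p_0\in\P$ and a base line $b_0\in\B$, and define $p_i:=\sigma^i(p_0)$, $b_i:=\sigma^i(b_0)$ for $i\in\ZZ_v$; both lists exhaust their respective sets because $\sigma$ acts as a single $v$-cycle on each side. Let $S=\{k\in\ZZ_v \mid (p_0,b_k)\in\I\}$. Because $\sigma$ is an automorphism of $\C$, incidence is $\sigma$-invariant, so $(p_i,b_j)\in\I$ iff $(p_0,b_{j-i})\in\I$ iff $j-i\in S$. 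Mapping $p_i\mapsto i^+$ and $b_i\mapsto i^-$ then yields a color-preserving graph isomorphism $L(\C)\to H(v,S)$. Since $L(\C)$ is the Levi graph of a combinatorial configuration, condition \eqref{eq:girthCondition} forces its girth to be at least $6$, so $H(v,S)$ has girth (at least) $6$.

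For the reverse implication, suppose $L(\C)\cong H(v,S)$ with $H(v,S)$ of girth $6$. The map $\rho\colon i^+\mapsto(i{+}1)^+$, $i^-\mapsto(i{+}1)^-$ is a color-preserving automorphism of $H(v,S)$ whose order is $v$ and which permutes the $u$-vertices and $v$-vertices as single $v$-cycles. Pulling $\rho$ back through the isomorphism yields a color-preserving automorphism of $L(\C)$ of order $v$. Because color preservation of a Levi-graph automorphism is precisely what is needed for it to descend to an automorphism of $\C$ (as opposed to a duality), this produces an order-$v$ automorphism of $\C$ that cyclically permutes $\P$ and $\B$; hence $\C$ is cyclic.

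The only nontrivial point I expect to spell out is why the girth hypothesis matters: one must observe that a cyclic Haar graph $H(v,S)$ realizes a combinatorial configuration as its Levi graph precisely when it contains no $4$-cycle, i.e., when its girth is at least $6$, since a $4$-cycle would give two distinct points on two distinct common lines, violating \eqref{eq:girthCondition}. Everything else reduces to translating the cyclic action into the difference set $S$ and back, so I do not anticipate any serious obstacle beyond keeping the bipartition (points vs.\ lines) aligned with the bipartition of $H(v,S)$ throughout.
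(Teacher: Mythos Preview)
The paper does not give its own proof of this proposition; it is quoted from \cite{HMP}. Your argument is the natural one and is essentially correct. The only point that deserves another word is the phrase ``girth $6$'' (as opposed to ``girth at least $6$'') in the forward direction: from the configuration axiom you obtain girth $\ge 6$, but to get equality you must exhibit a $6$-cycle. For $|S|\ge 3$ this is immediate---with distinct $0,a,b\in S$, the vertices $0^+,\,0^-,\,(-a)^+,\,(b-a)^-,\,(b-a)^+,\,b^-$ form a $6$-cycle in $H(v,S)$---and the paper itself remarks in its conclusion that cyclic Haar graphs have girth at most $6$. For $k\le 2$ the girth can exceed $6$, so the proposition as literally stated is meant for $k\ge 3$.
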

It can be shown that each cyclic configuration is self-dual, see for instance~\cite{HMP}. 

\section{Splittable and unsplittable configurations (and graphs)}
\label{sec:definitions}
Let $G$ be any graph. The \emph{square} of $G$, denoted $G^2$, is a graph with the same vertex set as $G$, where two vertices are adjacent  if and only if their distance
in $G$ is at most 2. In other words, $V(G^2) = V(G)$ and $E(G^2) = \{uv \mid d_G(u, v) \leq 2 \}$. The square of the Levi graph $L(\C)$ of
a configuration $\C$ is called the \emph{Gr\"{u}nbaum graph} of $\C$ in \cite{PS} and~\cite{PT}. In \cite{G}, it is called the \emph{independence graph}. Two elements of a configuration $\C$ are said to be \emph{independent} if they correspond
to independent vertices of the Gr\"{u}nbaum graph.

\begin{example}
The Grünbaum graph of the Heawood graph is shown in Figure~\ref{fig:3connSplittable}. Its complement is the Möbius ladder $M_{14}$.
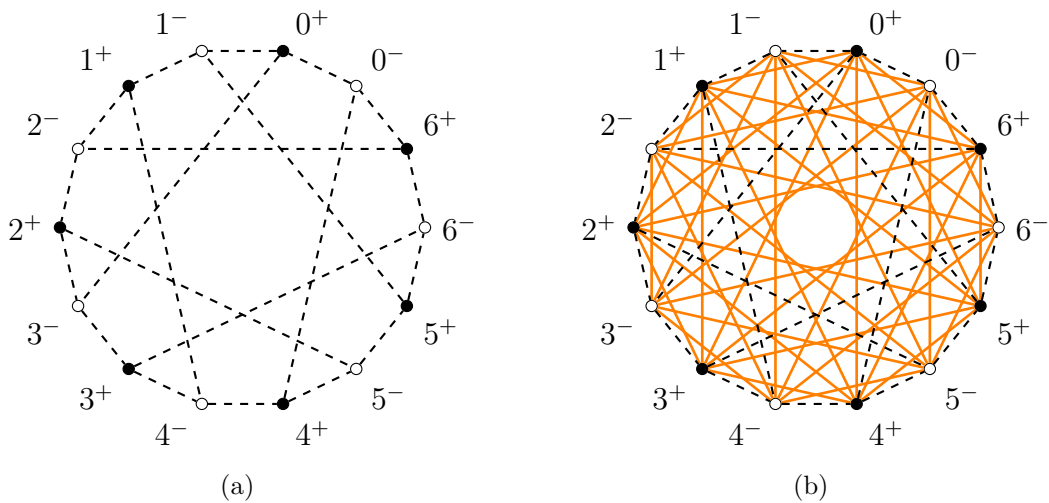
\begin{figure}[!htb]
\centering
\subfigure[]{
\usetikzlibrary{calc}
\begin{tikzpicture}[scale=1.2]
\pgfmathsetmacro{\a}{360 / 14}
\tikzstyle{white} = [inner sep=1.5, draw, circle, fill=white]
\tikzstyle{black} = [inner sep=1.5, draw, circle, fill=black]
\tikzstyle{edge} = [draw, dashed, line width=0.8, color=black]
\node[white,label=0:$6^-$] (a0) at (0:2) {};
\node[black,label=\a:$6^+$] (a1) at (\a:2) {};
\node[white,label=2*\a:$0^-$] (a2) at (2*\a:2) {};
\node[black,label=3*\a:$0^+$] (a3) at (3*\a:2) {};
\node[white,label=4*\a:$1^-$] (a4) at (4*\a:2) {};
\node[black,label=5*\a:$1^+$] (a5) at (5*\a:2) {};
\node[white,label=6*\a:$2^-$] (a6) at (6*\a:2) {};
\node[black,label=7*\a:$2^+$] (a7) at (7*\a:2) {};
\node[white,label=8*\a:$3^-$] (a8) at (8*\a:2) {};
\node[black,label=9*\a:$3^+$] (a9) at (9*\a:2) {};
\node[white,label=10*\a:$4^-$] (a10) at (10*\a:2) {};
\node[black,label=11*\a:$4^+$] (a11) at (11*\a:2) {};
\node[white,label=12*\a:$5^-$] (a12) at (12*\a:2) {};
\node[black,label=13*\a:$5^+$] (a13) at (13*\a:2) {};
\draw[edge] (a0) -- (a1) -- (a2) -- (a3) -- (a4) -- (a5) -- (a6)  -- (a7)  -- (a8)  -- (a9) -- (a10)  -- (a11)  -- (a12)  -- (a13) -- (a0);
\draw[edge] (a3) -- (a8);
\draw[edge] (a5) -- (a10);
\draw[edge] (a7) -- (a12);
\draw[edge] (a9) -- (a0);
\draw[edge] (a11) -- (a2);
\draw[edge] (a13) -- (a4);
\draw[edge] (a1) -- (a6);
\end{tikzpicture}
}
\quad
\subfigure[]{
\usetikzlibrary{calc}
\begin{tikzpicture}[scale=1.2]
\pgfmathsetmacro{\a}{360 / 14}
\tikzstyle{white} = [inner sep=1.5, draw, circle, fill=white]
\tikzstyle{black} = [inner sep=1.5, draw, circle, fill=black]
\tikzstyle{edge} = [draw, dashed, line width=0.8, color=black]
\tikzstyle{oedge} = [draw, line width=1, color=orange]
\node[white,label=0:$6^-$] (a0) at (0:2) {};
\node[black,label=\a:$6^+$] (a1) at (\a:2) {};
\node[white,label=2*\a:$0^-$] (a2) at (2*\a:2) {};
\node[black,label=3*\a:$0^+$] (a3) at (3*\a:2) {};
\node[white,label=4*\a:$1^-$] (a4) at (4*\a:2) {};
\node[black,label=5*\a:$1^+$] (a5) at (5*\a:2) {};
\node[white,label=6*\a:$2^-$] (a6) at (6*\a:2) {};
\node[black,label=7*\a:$2^+$] (a7) at (7*\a:2) {};
\node[white,label=8*\a:$3^-$] (a8) at (8*\a:2) {};
\node[black,label=9*\a:$3^+$] (a9) at (9*\a:2) {};
\node[white,label=10*\a:$4^-$] (a10) at (10*\a:2) {};
\node[black,label=11*\a:$4^+$] (a11) at (11*\a:2) {};
\node[white,label=12*\a:$5^-$] (a12) at (12*\a:2) {};
\node[black,label=13*\a:$5^+$] (a13) at (13*\a:2) {};
\draw[oedge] (a1) -- (a3);
\draw[oedge] (a2) -- (a4);
\draw[oedge] (a3) -- (a5);
\draw[oedge] (a4) -- (a6);
\draw[oedge] (a5) -- (a7);
\draw[oedge] (a6) -- (a8);
\draw[oedge] (a7) -- (a9);
\draw[oedge] (a8) -- (a10);
\draw[oedge] (a9) -- (a11);
\draw[oedge] (a10) -- (a12);
\draw[oedge] (a11) -- (a13);
\draw[oedge] (a12) -- (a0);
\draw[oedge] (a13) -- (a1);
\draw[oedge] (a0) -- (a2);
\draw[oedge] (a0) -- (a4);
\draw[oedge] (a1) -- (a5);
\draw[oedge] (a2) -- (a6);
\draw[oedge] (a3) -- (a7);
\draw[oedge] (a4) -- (a8);
\draw[oedge] (a5) -- (a9);
\draw[oedge] (a6) -- (a10);
\draw[oedge] (a7) -- (a11);
\draw[oedge] (a8) -- (a12);
\draw[oedge] (a9) -- (a13);
\draw[oedge] (a10) -- (a0);
\draw[oedge] (a11) -- (a1);
\draw[oedge] (a12) -- (a2);
\draw[oedge] (a13) -- (a3);
\draw[oedge] (a0) -- (a6);
\draw[oedge] (a1) -- (a7);
\draw[oedge] (a2) -- (a8);
\draw[oedge] (a3) -- (a9);
\draw[oedge] (a4) -- (a10);
\draw[oedge] (a5) -- (a11);
\draw[oedge] (a6) -- (a12);
\draw[oedge] (a7) -- (a13);
\draw[oedge] (a8) -- (a0);
\draw[oedge] (a9) -- (a1);
\draw[oedge] (a10) -- (a2);
\draw[oedge] (a11) -- (a3);
\draw[oedge] (a12) -- (a4);
\draw[oedge] (a13) -- (a5);
\draw[edge] (a0) -- (a1) -- (a2) -- (a3) -- (a4) -- (a5) -- (a6)  -- (a7)  -- (a8)  -- (a9) -- (a10)  -- (a11)  -- (a12)  -- (a13) -- (a0);
\draw[edge] (a3) -- (a8);
\draw[edge] (a5) -- (a10);
\draw[edge] (a7) -- (a12);
\draw[edge] (a9) -- (a0);
\draw[edge] (a11) -- (a2);
\draw[edge] (a13) -- (a4);
\draw[edge] (a1) -- (a6);
\end{tikzpicture}
}
\caption{The Heawood graph $H = H(7,\{0,1,3\}) \cong \mathrm{LCF}[5,-5]^7$ (on the left) is the Levi graph of the Fano plane. Its Gr\"{u}nbaum graph $G$ is on the right. Note that there is an orange solid edge between two vertices of $G$ if and only if they are
at distance 2 in $H$. }
\label{fig:3connSplittable}
\end{figure}
\end{example}

It is easy to see that two elements of $\C$ are independent if and only if one of the following hold:
\begin{enumerate}
\renewcommand{\labelenumi}{(\roman{enumi})}
\item two points of $\C$ that do not lie on a common line of $\C$;
\item two lines of $\C$ that do not intersect in a common point of $\C$;
\item a point of $\C$ and a line of $\C$ that are not incident.
\end{enumerate}    

The definition of unsplittable configuration was introduced in \cite{G} and is equivalent to the following:

\begin{definition}
A configuration $\C$ is \emph{splittable} if there exists an independent set of vertices $\Sigma$ in the Gr\"{u}nbaum graph
$(L(\C))^2$ such that $L(\C) - \Sigma$, i.e., the graph obtained by removing the set of vertices $\Sigma$ from the Levi graph $L(\C)$, is disconnected.
In this case the set $\Sigma$ is called a \emph{splitting set of elements}.  A configuration that is not splittable is called \emph{unsplittable}.
\end{definition}
This definition carries over to graphs:
\begin{definition}
A connected graph $G$ is \emph{splittable} if there exists an independent set $\Sigma$ in $G^2$ such
that $G - \Sigma$ is disconnected.
\end{definition}

\begin{example}
Every cycle of length at least 6 is splittable (there exists a pair of vertices at distance 3 in $G$).

Every graph of diameter 2 without a cut vertex is unsplittable. The square of such a graph on $n$
vertices is the complete graph $K_n$. This implies that $|S| = 1$. Since there are no cut vertices,
a splitting set does not exist. The Petersen graph is an example of unsplittable graph.
\end{example}

In~\cite{G}, refinements of the above definition are also considered. Configuration $\C$ is 
\emph{point-splittable} if it is splittable and there exists a splitting set of elements that consists of points only (i.e., only black vertices in
the corresponding colored Levi graph). In a similar way
\emph{line-splittable} configurations are defined. Note that these refinements can be defined for any bipartite graph
with a given black-and-white coloring. There are four possibilities, that we call \emph{splitting types}.
Any configuration may be:
\begin{enumerate}
\renewcommand{\labelenumi}{(T\arabic{enumi})}
\item point-splittable, line-splittable,
\item point-splittable, line-unsplittable,
\item point-unsplittable, line-splittable,
\item point-unsplittable, line-unsplittable.
\end{enumerate}    

Any configuration of splitting type T1, T2 or T3 is splittable. A configuration of splitting type T4 may be splittable or 
unsplittable. For an example of a point-splittable (T2) configuration see Figure~\ref{fig:pointSplittable}. The configuration on
Figure~\ref{fig:pointSplittable} is isomorphic to a configuration on Figure 5.1.11 from \cite{G}. For an example
of a line-splittable (T3) configuration see Figure~\ref{fig:lineSplittable}.
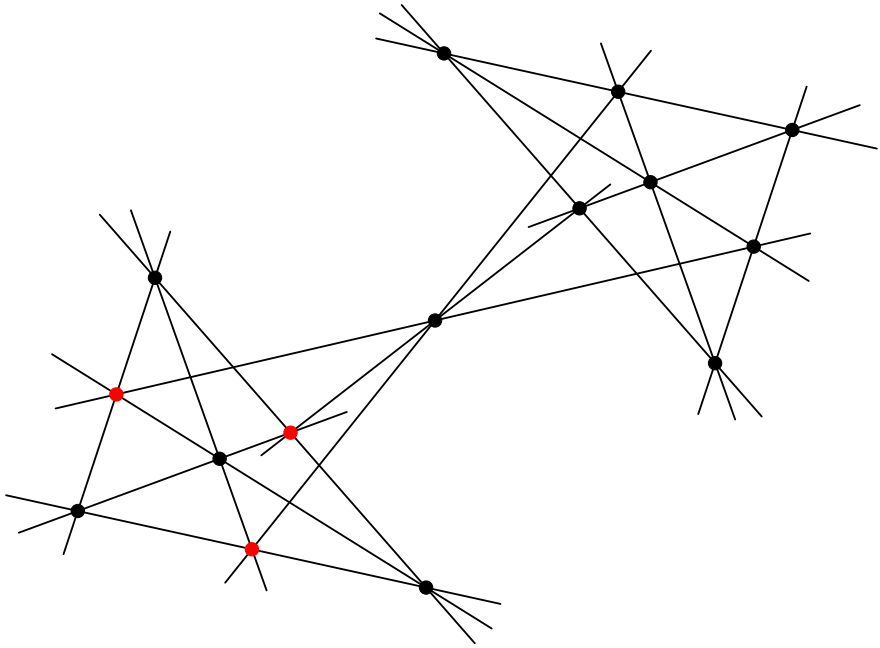
\begin{figure}[!htb]
\centering
\definecolor{uuuuuu}{rgb}{0.0,0.0,0.0}
\definecolor{ududff}{rgb}{1.0,0.0,0.0}
\begin{tikzpicture}[line cap=round,line join=round,>=triangle 45,x=1.0cm,y=1.0cm]
\clip(1.8592195121951212,2.0788292682926808) rectangle (13.569658536585376,10.767219512195112);
\draw [line width=.7pt] (2.7337026712107106,3.371390270602799)-- (4.137992602932553,7.6495758765460735);
\draw [line width=.7pt] (2.5820777313892247,6.0233096093199014)-- (8.365477685865843,2.38563485534475);
\draw [line width=.7pt] (3.6200006787025276,7.9317632142784165)-- (5.404879540698868,2.8911330947517238);
\draw [line width=.7pt] (1.9781879106418252,4.153119777915322)-- (8.480845708300448,2.711809028949742);
\draw [line width=.7pt] (3.2096476670588228,7.874878621095507)-- (8.143110713603862,2.189960938321759);
\draw [line width=.7pt] (6.844803509114219,10.209694671037425)-- (13.431206395935057,8.749821866226611);
\draw [line width=.7pt] (6.893055796398109,10.543195657790465)-- (12.534815044268154,6.994610855506983);
\draw [line width=.7pt] (7.180472376807803,10.654212576942848)-- (11.916671663962632,5.196605451347222);
\draw [line width=.7pt] (9.801250498985603,10.14500598694333)-- (11.567489522470998,5.157016152100319);
\draw [line width=.7pt] (2.1461499815030263,3.6560474563407372)-- (6.459634837749807,5.257678964145437);
\draw [line width=.7pt] (12.507097519323047,9.571535324040882)-- (11.081602538456526,5.228748289308005);
\draw [line width=.7pt] (13.205870792805461,9.326034560094891)-- (8.850706660379409,7.708927202907171);
\draw [line width=.7pt] (4.8598525235798045,2.992999287537109)-- (10.460267802680622,10.048424418561183);
\draw [line width=.7pt] (2.6296133461688216,5.303477445346096)-- (12.555146709572332,7.624095104958757);
\draw [line width=.7pt] (5.335223286535328,4.6821577788579685)-- (9.925784691047372,8.274771051954367);
% \begin{scriptsize}
\draw [fill=uuuuuu] (3.936878048780487,7.036878048780478) circle (2.5pt);
\draw [fill=uuuuuu] (2.921658536585365,3.944) circle (2.5pt);
\draw [fill=uuuuuu] (7.501951219512194,2.9287804878048713) circle (2.5pt);
\draw [fill=ududff,color=ududff] (3.429268292682926,5.4904390243902395) circle (2.5pt);
\draw [fill=ududff,color=ududff] (5.71941463414634,4.982829268292675) circle (2.5pt);
\draw [fill=ududff,color=ududff] (5.211804878048779,3.4363902439024354) circle (2.5pt);
\draw [fill=uuuuuu] (7.62,6.470243902439024) circle (2.5pt);
\draw [fill=uuuuuu] (4.786829268292681,4.63655284552845) circle (2.5pt);
\draw [fill=uuuuuu] (11.303121951219513,5.90360975609757) circle (2.5pt);
\draw [fill=uuuuuu] (12.318341463414635,8.996487804878047) circle (2.5pt);
\draw [fill=uuuuuu] (7.738048780487806,10.011707317073176) circle (2.5pt);
\draw [fill=uuuuuu] (10.453170731707319,8.303934959349597) circle (2.5pt);
\draw [fill=uuuuuu] (11.810731707317077,7.450048780487808) circle (2.5pt);
\draw [fill=uuuuuu] (10.028195121951217,9.504097560975614) circle (2.5pt);
\draw [fill=uuuuuu] (9.520585365853657,7.957658536585377) circle (2.5pt);
% \end{scriptsize}
\end{tikzpicture}
\caption{A point-splittable $(15_3)$ configuration of type 2. Points that belong to a splitting set are colored red. Its dual is of type 3 (see Figure~\ref{fig:lineSplittable}).}
\label{fig:pointSplittable}
\end{figure}
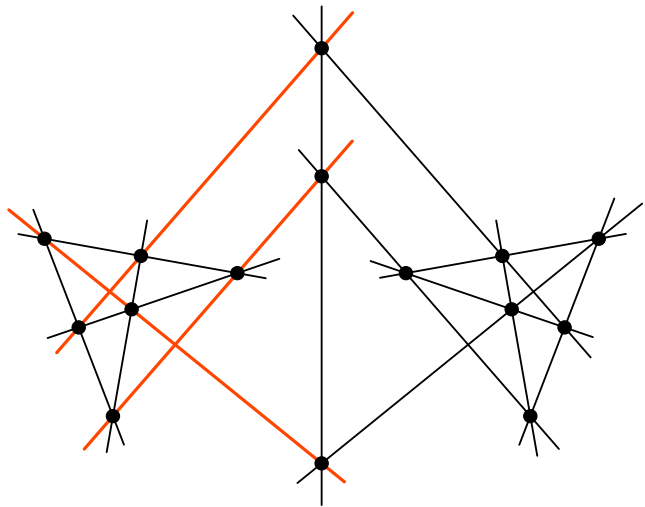
\begin{figure}[!htb]
\centering
% \usetikzlibrary{arrows}
\definecolor{uuuuuu}{rgb}{0.0,0.0,0.0}
\definecolor{pomaranca}{rgb}{1.0,0.271,0.0}
\begin{tikzpicture}[line cap=round,line join=round,>=triangle 45,x=1.0cm,y=1.0cm]
\clip(0.2382415373244706,-0.2978888396156705) rectangle (8.72201389504807,6.454501404286756);
\draw [line width=.7pt] (0.6275531658143828,3.6712372542538456)-- (1.8204297871306918,0.5545342637691801);
\draw [line width=1.2pt,color=pomaranca] (1.2981879693747151,0.4967954568586148)-- (4.822663176736382,4.580711173325307);
\draw [line width=.7pt] (0.4310878346985034,3.3479559869367126)-- (3.6856943870583656,2.762268180430754);
\draw [line width=.7pt] (1.5923578871778972,0.4549721712150598)-- (2.1247458292818395,3.528587263703896);
\draw [line width=.7pt] (0.8155212295587199,1.9686126660638288)-- (3.8642819096588803,3.0203258467321077);
\draw [line width=.7pt] (4.42,6.367932298595699)-- (4.42,-0.24594737620103535);
\draw [line width=1.2pt,color=pomaranca] (0.30616138548773747,3.669213404328374)-- (4.719665500804437,0.062207178643628236);
\draw [line width=1.2pt,color=pomaranca] (0.9361915575228804,1.7751118612956178)-- (4.826857433740685,6.283343749611484);
\draw [line width=.7pt] (5.186621989019631,2.7680837243728074)-- (8.422339176854669,3.350372265776564);
\draw [line width=.7pt] (7.018946477745434,0.5529045921911577)-- (8.269278867969973,3.8197258460208423);
\draw [line width=.7pt] (5.06291474970014,2.9902461239920757)-- (7.98735491386069,1.981419066230917);
\draw [line width=.7pt] (6.715226751539854,3.5287455618049632)-- (7.255959550356794,0.40695342489178365);
\draw [line width=.7pt] (4.118874154277754,4.463056805642259)-- (7.540994289213668,0.49774299849429715);
\draw [line width=.7pt] (8.636078172395868,3.7527703042795055)-- (4.100906070994731,0.04632898760887505);
\draw [line width=.7pt] (4.045822848550674,6.245476120924671)-- (7.958660769110559,1.711552816148933);
\draw [fill=uuuuuu] (0.7749699926090192,3.286072135994082) circle (2.5pt);
\draw [fill=uuuuuu] (1.6752886917960068,0.9337534368070953) circle (2.5pt);
\draw [fill=uuuuuu] (3.3114447893569827,2.829616851441241) circle (2.5pt);
\draw [fill=uuuuuu] (4.42,4.114133206630769) circle (2.5pt);
\draw [fill=uuuuuu] (2.043207390983001,3.0578444937176616) circle (2.5pt);
\draw [fill=uuuuuu] (1.225129342202513,2.1099127864005887) circle (2.5pt);
\draw [fill=uuuuuu] (1.9205678245873359,2.34981414141414) circle (2.5pt);
\draw [fill=uuuuuu] (4.42,5.811905770832596) circle (2.5pt);
\draw [fill=uuuuuu] (4.42,0.3071135611421479) circle (2.5pt);
\draw [fill=uuuuuu] (5.528555210643017,2.829616851441241) circle (2.5pt);
\draw [fill=uuuuuu] (8.06503000739098,3.2860721359940825) circle (2.5pt);
\draw [fill=uuuuuu] (7.164711308203993,0.9337534368070957) circle (2.5pt);
\draw [fill=uuuuuu] (7.6148706577974865,2.109912786400589) circle (2.5pt);
\draw [fill=uuuuuu] (6.9194321754126635,2.3498141414141402) circle (2.5pt);
\draw [fill=uuuuuu] (6.796792609017,3.057844493717662) circle (2.5pt);
\end{tikzpicture}
\caption{A line-splittable $(15_3)$ configuration of type 3.
Lines that belong to a splitting set are colored orange.
Its dual is depicted in Figure~\ref{fig:pointSplittable}.}
\label{fig:lineSplittable}
\end{figure}

Note the following:
\begin{proposition}
If $\C$ is of type 1 then its dual is also of type 1. If it is of type 2 then its dual is of type 3 (and vice versa). If it is of type 4
then its dual is also of type 4.
\end{proposition}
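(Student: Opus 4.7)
The plan is to exploit the fact that dualising a configuration only relabels the bipartition of the Levi graph, while leaving the underlying graph, and therefore the Grünbaum graph, completely unchanged.

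First I would observe that, by the definition given before the statement, the Levi graph $L(\C^\ast)$ has the same vertex set $\P \cup \B$ and the same edge set as $L(\C)$; the only difference is that the black-and-white coloring is reversed. In particular, the Grünbaum graph, defined purely in terms of the Levi graph via squaring, satisfies $(L(\C))^2 = (L(\C^\ast))^2$ as uncolored graphs. Hence ``$\Sigma$ is independent in the Grünbaum graph and disconnects $L(\C)-\Sigma$'' is a condition that is invariant under passing from $\C$ to $\C^\ast$.

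Next, I would translate the point/line refinement. A splitting set consisting only of points of $\C$ is the same set of vertices as a splitting set consisting only of lines of $\C^\ast$, since under duality points of $\C$ are precisely the lines of $\C^\ast$. Thus I obtain two equivalences:
\begin{equation*}
\C \text{ is point-splittable} \iff \C^\ast \text{ is line-splittable},
\qquad
\C \text{ is line-splittable} \iff \C^\ast \text{ is point-splittable}.
\end{equation*}

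Finally, I would apply these two equivalences in all four cases of the definition of splitting type. If $\C$ is of type T1 (both point- and line-splittable), both equivalences make $\C^\ast$ point- and line-splittable, so $\C^\ast$ is of type T1. If $\C$ is of type T2 (point-splittable, line-unsplittable), then $\C^\ast$ is line-splittable and point-unsplittable, i.e.\ of type T3; the converse T3 $\to$ T2 is symmetric. Finally, if $\C$ is of type T4, then $\C^\ast$ is neither point- nor line-splittable, so it is of type T4 as well. There is no substantive obstacle here: the whole content is the remark that duality of configurations is, on the level of Levi graphs, nothing but swapping the color classes, and the Grünbaum graph does not see this swap.
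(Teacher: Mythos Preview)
Your argument is correct. The paper states this proposition without proof, treating it as immediate from the definitions; your write-up supplies exactly the natural justification the paper leaves implicit, namely that duality swaps the colour classes of the Levi graph while the (uncoloured) Gr\"unbaum graph is unchanged.
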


Since types are mutually disjoint, this has a straightforward consequence for cyclic configurations:

\begin{corollary}
Any self-dual configuration, in particular any cyclic configuration, is either of type 1 or 4.
\end{corollary}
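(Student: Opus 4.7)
The plan is to derive the corollary directly from the preceding proposition together with the self-duality of the configurations in question, using only the observation that the four splitting types form a partition of all configurations.

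First I would fix a self-dual configuration $\C$, so that by definition there is a configuration isomorphism $\C \cong \C^*$. Since splittability, point-splittability and line-splittability are combinatorial invariants of the (colored) Levi graph that are preserved under configuration isomorphism (the colors, i.e.\ the roles of points and lines, being respected), $\C$ and $\C^*$ must belong to the same splitting type. I would state this preservation explicitly as a one-line remark.

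Next I would combine this with the preceding proposition. Suppose for contradiction that $\C$ has type T2. Then by the proposition $\C^*$ has type T3, which contradicts $\C$ and $\C^*$ lying in the same type, because the four types T1--T4 are mutually disjoint by their definition (they correspond to the four combinations of the two independent boolean properties ``point-splittable'' and ``line-splittable''). Symmetrically, type T3 is excluded, leaving only T1 and T4 as possibilities.

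Finally, to handle the ``in particular'' clause, I would invoke the fact stated just after the definition of cyclic configurations, namely that every cyclic configuration is self-dual (the reference \cite{HMP} is cited in the text). Applying the conclusion of the previous paragraph then yields that every cyclic configuration is of type T1 or T4. There is essentially no obstacle here: the argument is a two-line deduction from the proposition, and the only thing worth pointing out carefully is the disjointness of the four types, which is immediate from how they were defined.
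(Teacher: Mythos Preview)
Your proposal is correct and follows exactly the paper's approach: the paper states the corollary as an immediate consequence of the preceding proposition, noting only that the four types are mutually disjoint, and it has already recorded (with reference to \cite{HMP}) that cyclic configurations are self-dual. Your write-up simply makes explicit the contradiction argument that the paper leaves implicit.
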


Obviously, unsplittable configurations are of type 4. However, the converse is not true:
\begin{proposition}
\label{prop:unsplitGp}
Any unsplittable configuration is point-unsplittable and line-unsplittable.
There exist splittable configurations that are both point-unsplittable and line-unsplittable.
\end{proposition}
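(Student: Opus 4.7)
The first statement is essentially formal and should come from the definitions. If $\C$ is point-splittable, then by definition there is a splitting set $\Sigma$ consisting of only points; but since any set of points is in particular a set of elements, $\Sigma$ is also an independent set in $(L(\C))^2$ whose removal disconnects $L(\C)$, so $\C$ is splittable. Contrapositively, an unsplittable $\C$ cannot be point-splittable, and the same argument with ``line'' in place of ``point'' shows it cannot be line-splittable either. I would write the whole first half as this one-line contrapositive, taking care only to note that the splitting set condition (independence in the Grünbaum graph plus disconnection of $L(\C)$) is inherited verbatim when one restricts the allowable colour of the vertices in $\Sigma$.

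The substantive content is the second assertion, which asks for an explicit splittable configuration of type T4. By the Corollary, any self-dual (in particular any cyclic) configuration is automatically of type T1 or T4, so it suffices to exhibit a \emph{cyclic} configuration that is splittable but not type T1. My plan is to describe a small cyclic $(n_3)$ configuration as a cyclic Haar graph $H(n,S)$ of girth $6$, and then verify two things: (i) there is an independent set $\Sigma$ in $(L(\C))^2$ whose removal disconnects $L(\C)$ (so $\C$ is splittable), and (ii) no such $\Sigma$ consisting only of black vertices exists --- equivalently, by self-duality, no such $\Sigma$ consisting only of white vertices exists either, so it must be of type T4.

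For the existence in (i), the idea is to take advantage of the $\ZZ_n$-symmetry: if $v$ is divisible enough, one can often find a ``block'' of consecutive indices $\{i^+,i^-,(i{+}1)^+,(i{+}1)^-,\ldots\}$ that is at pairwise distance $\geq 3$ from another such block on the opposite side of the cycle, and whose removal cuts the Haar graph into two symmetric pieces. The independence-in-the-square condition becomes a simple arithmetic condition on the differences of the elements of $S$. For (ii), point-unsplittability amounts to showing that after deleting any independent set of black vertices the remaining Haar graph stays connected; again this can be phrased as a check that the white vertices together with the surviving black vertices still form a single component, and then dualised.

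The main obstacle is the verification of point-unsplittability (condition (ii)): unlike splittability, which is an existence statement that can be demonstrated by one clever $\Sigma$, point-unsplittability is a universal statement over all independent sets of points. I expect to handle this by exploiting the girth-$6$ condition to bound how many points can appear in an independent set of $(L(\C))^2$, and then arguing combinatorially (or by exhaustive check on the chosen small example) that removing any such set leaves the Levi graph connected. Once one concrete example is in hand the argument is complete; the section that follows the proposition in the paper is in fact devoted to producing infinitely many such examples, so even a single pilot example here is enough to establish the proposition.
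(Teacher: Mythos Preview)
Your treatment of the first assertion is fine and matches the paper's one-line ``obviously true''.

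For the second assertion your plan diverges from the paper in a significant way. The paper does \emph{not} look for a cyclic example: it simply exhibits one explicit $(15_3)$ configuration (Figure~\ref{fig:splittableType4}), writes down a mixed splitting set consisting of three points and two lines, and leaves the point- and line-unsplittability to direct inspection of that single object. Immediately after the proof the paper even remarks that this configuration is \emph{not} cyclic. So the paper's argument is just ``here is one concrete witness'', with no structural theory behind it.

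Your proposed route---find a cyclic Haar graph that is splittable but of type T4---is more ambitious, and there is a genuine gap in it as written. You assert that ``the section that follows the proposition in the paper is in fact devoted to producing infinitely many such examples'', but that is not what Section~3 does: it produces infinitely many \emph{splittable} cyclic configurations and infinitely many \emph{unsplittable} ones, but it never determines the splitting type (T1 versus T4) of the splittable ones. The splitting sets exhibited in Theorem~\ref{lem:tech} are all mixed (three black, three white), so they tell you nothing about point-splittability. Thus the shortcut you are counting on does not exist in the paper, and your plan would require you to carry out the full point-unsplittability verification yourself on some specific $H(n,S)$---which you acknowledge is the hard universal part---without any help from later results. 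Unless you actually perform that check on a named example, the argument is incomplete; and since the paper's own witness is non-cyclic, it gives no hint that the cyclic search will succeed.
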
 
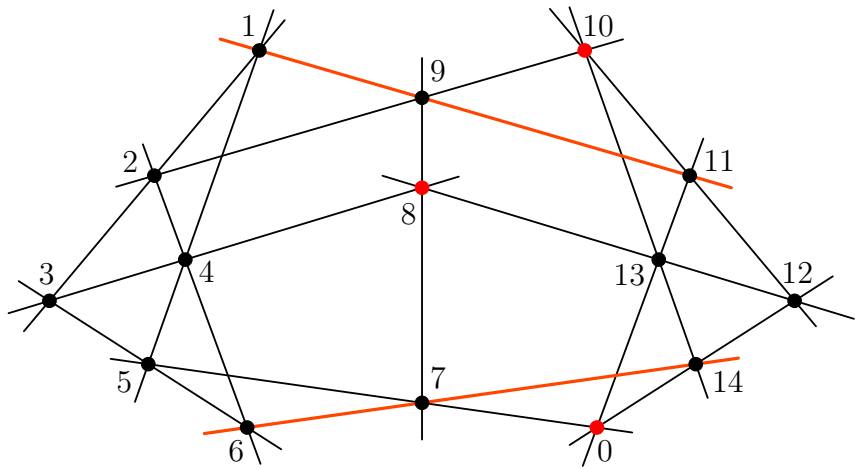
\begin{figure}[!htb]
\centering
% \usetikzlibrary{arrows}
\definecolor{uuuuuu}{rgb}{0.0,0.0,0.0}
\definecolor{pomaranca}{rgb}{1.0,0.271,0.0}
\definecolor{ududff}{rgb}{1.0,0.0,0.0}
\begin{tikzpicture}[line cap=round,line join=round,>=triangle 45,x=1.0cm,y=1.0cm]
\tikzstyle{every node}=[circle,draw,inner sep=1.8pt]
\clip(0.72,2.08) rectangle (12.04,8.4);
\draw [line width=.7pt] (0.9551744865586915,4.641579562531307)-- (4.407153114042411,2.411070295541827);
\draw [line width=.7pt] (4.1358791521670355,2.218494780132869)-- (2.5881657703258467,6.45567731730465);
\draw [line width=.7pt] (1.0244171673819755,3.976327896995709)-- (4.453806008583691,8.101534763948498);
\draw [line width=.7pt] (2.484354125079744,3.039529561871052)-- (4.3076039758813,8.234542835387813);
\draw [line width=.7pt] (0.8214615267097823,4.215223302948516)-- (6.743883485257424,6.027307633548909);
\draw [line width=.7pt] (8.2007580564368,2.4678744364668552)-- (11.662721656370012,4.704835531808314);
\draw [line width=.7pt] (10.017227995801864,3.09200790237277)-- (8.199025785606981,8.272638857448603);
\draw [line width=.7pt] (11.441567381974249,4.041303004291844)-- (7.941697854077255,8.251290987124463);
\draw [line width=.7pt] (8.374201834862387,2.1913394495412843)-- (9.959468522619426,6.531331857007276);
\draw [line width=.7pt] (2.2363928809584044,5.892298972153878)-- (8.903607119041597,7.847701027846123);
\draw [line width=1.2pt,color=pomaranca] (3.605590609119547,7.850869185311229)-- (10.32805261100508,5.8792637218607);
\draw [line width=.7pt] (5.7395406630340675,6.038498752355244)-- (11.940375821153944,4.141228293527524);
\draw [line width=.7pt] (2.164351890436879,3.6105668494971224)-- (9.02481546137472,2.6338228834652937);
\draw [line width=1.2pt,color=pomaranca] (3.397171271216029,2.61986845217313)-- (10.420037842525538,3.6197342013087197);
\draw [line width=.7pt] (6.26,7.6)-- (6.26,2.54);
\node [fill=uuuuuu,label={[xshift=-1,yshift=0]$3$}] at (1.36,4.38) {};
\node [fill=uuuuuu,label={[xshift=-4,yshift=-19]$6$}] at (3.96,2.7) {};
\node [fill=uuuuuu,label={[xshift=-4,yshift=-1]$1$}] at (4.12,7.7) {};
\node [fill=uuuuuu,label={[xshift=-9,yshift=-5]$2$}] at (2.74,6.04) {};
\node [fill=uuuuuu,label={[xshift=-9,yshift=-17]$5$}] at (2.66,3.54) {};
\node [fill=uuuuuu,label={[xshift=8,yshift=-15]$4$}] at (3.1466666666666665,4.926666666666665) {};
\node [fill=ududff,color=ududff,label={[xshift=-5,yshift=-20]$8$}] at (6.26,5.8792537313432796) {};
\node [fill=ududff,color=ududff,label={[xshift=5,yshift=-3]$10$}] at (8.4,7.7) {};
\node [fill=ududff,color=ududff,label={[xshift=3,yshift=-19]$0$}] at (8.56,2.7) {};
\node [fill=uuuuuu,label={[xshift=1,yshift=-2]$12$}] at (11.16,4.38) {};
\node [fill=uuuuuu,label={[xshift=12,yshift=-19]$14$}] at (9.86,3.54) {};
\node [fill=uuuuuu,label={[xshift=11,yshift=-7]$11$}] at (9.78,6.04) {};
\node [fill=uuuuuu,label={[xshift=-11,yshift=-18]$13$}] at (9.373333333333335,4.926666666666664) {};
\node [fill=uuuuuu,label={[xshift=6]$9$}] at (6.26,7.072367491166078) {};
\node [fill=uuuuuu,label={[xshift=6,yshift=-1]$7$}] at (6.26,3.027457627118644) {};
\end{tikzpicture}
\caption{A splittable $(15_3)$ configuration of type 4. Elements of a splitting set are points 0, 8 and 10 (colored blue) and
lines 1 9 11 and 6 7 14 (colored green).}
\label{fig:splittableType4}
\end{figure}

\begin{proof}
The first statement of Proposition~\ref{prop:unsplitGp} is obviously true. An example that provides the proof
of the second statement is shown in Figure~\ref{fig:splittableType4}. The splitting set is \{0, 8, 10, (1, 9, 11), (6, 7, 14)\}.
\end{proof}

Note that configuration in Figure~\ref{fig:splittableType4} is not cyclic, but it is 3-connected. In \cite{G}, the following theorem is proven:
\begin{theorem}[{\cite[Theorem 5.1.5]{G}}]
\label{thm:unsplit3Conn}
Any unsplittable $(n_3)$ configuration is 3-connected.
\end{theorem}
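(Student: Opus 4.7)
The plan is to prove the contrapositive: if $L(\C)$ fails to be $3$-connected, then $\C$ is splittable. The easy cases dispose of connectivity $0$ and $1$. If $L(\C)$ is disconnected, the empty set $\Sigma = \emptyset$ is vacuously independent in $(L(\C))^2$ and $L(\C) - \emptyset = L(\C)$ is disconnected. If $L(\C)$ has a cut vertex $v$, the singleton $\Sigma = \{v\}$ is trivially independent in any square graph, and $L(\C) - \{v\}$ is disconnected by definition of a cut vertex.

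The substantive case is when the vertex connectivity equals exactly $2$. Let $\{u, v\}$ be a $2$-vertex-cut of $L(\C)$. If $d_L(u, v) \geq 3$, then $\Sigma = \{u, v\}$ is itself independent in $(L(\C))^2$ and disconnects $L(\C)$. Otherwise I would split into the remaining sub-cases $d_L(u, v) \in \{1, 2\}$ and use $3$-regularity together with the girth-at-least-$6$ condition from Definition~\ref{def:combconf} to construct an alternative independent cut.

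In the adjacent sub-case $d_L(u, v) = 1$, write $u = p$ (a point) and $v = b$ (a line) with $p$ incident to $b$. A counting argument based on $3$-regularity, together with the fact that neither $\{p\}$ nor $\{b\}$ alone is a cut, forces $L(\C) - \{p, b\}$ to have exactly two components $A$ and $B$; the component $A$ contains a unique further line $b_1$ through $p$ and a unique further point $p_1$ on $b$. The girth condition forces $p_1 \notin b_1$, for otherwise $b$ and $b_1$ would be two distinct common lines through $p$ and $p_1$, contradicting axiom~\eqref{eq:girthCondition}. Hence $b_1$ and $p_1$ are non-adjacent; lying in different bipartition classes, they are at distance at least $3$ in $L(\C)$, so $\Sigma = \{b_1, p_1\}$ is independent in $(L(\C))^2$. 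Removing $\Sigma$ severs both edges joining $A$ to $\{p, b\}$, isolating the non-empty subset $A \setminus \{b_1, p_1\}$ from the rest.

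In the sub-case $d_L(u, v) = 2$, write $u = p$, $v = q$ with unique common line $b$. The analogous analysis of the components of $L(\C) - \{p, q\}$ begins with the observation that the component containing $b$ cannot attach to $\{p, q\}$ through $b$ alone---for then $b$ would be a cut vertex---and this pigeonhole-style constraint forces a predictable distribution of the three lines through $p$ and the three through $q$ across the components. In every resulting sub-case one produces an independent splitting pair of the shape ``one attachment line plus the opposite cut vertex'', for example $\Sigma = \{b'', q\}$ where $b''$ is the unique line through $p$ in the ``outer'' component; the girth condition again ensures no common neighbour and hence distance at least $3$. The main obstacle is precisely this last sub-case: the naive choice $\{u, v\}$ fails independence in the square, and verifying that the replacement set has both pairwise distance at least $3$ in $L(\C)$ and still disconnects $L(\C)$ requires a disciplined enumeration of how the six lines through $p$ and $q$ split between the two components, driven throughout by the girth hypothesis.
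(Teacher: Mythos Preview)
The paper does not itself prove this theorem; it is quoted from Gr\"unbaum's monograph~\cite{G} and no argument is reproduced, so there is no in-paper proof to compare your attempt against.

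That said, your contrapositive strategy is correct and essentially complete. The connectivity-$0$, connectivity-$1$, and $d_L(u,v)\ge 3$ cases are immediate, and your treatment of $d_L(u,v)=1$ is clean: the two-component structure is forced, girth~$\ge 6$ prevents $p_1\in b_1$, and bipartiteness then gives $d_L(b_1,p_1)\ge 3$. For $d_L(u,v)=2$ two small corrections are worth noting. First, there are five distinct attachment lines, not six, since $b$ is common to $p$ and $q$. Second, the ``unique line through $p$ in the outer component'' need not exist --- the outer component $B$ may contain both of $b',b''$ --- but in that case $B$ contains exactly one of $c',c''$ (otherwise $A$ would meet the attachment set only in $b$, and since $b$ has a third neighbour $r\neq p,q$ lying in $A$, removing $b$ alone would already disconnect, contradicting $\kappa=2$); your recipe ``one attachment line plus the opposite cut vertex'' then applies with the roles of $p$ and $q$ interchanged. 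In every sub-case the chosen pair is a point together with a line not through it, hence automatically at distance $\ge 3$ by bipartiteness; the girth hypothesis enters only to guarantee that the common line $b$ is unique, which is exactly what makes the selected line non-incident with the opposite cut vertex. With these clarifications the enumeration closes and the proof goes through.
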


Our computational results show that the converse to Theorem~\ref{thm:unsplit3Conn} is not true. There exist 3-connected splittable configurations.  See,
for instance, the configuration in Figure~\ref{fig:splittableType4}.
%
%%%%%%%%%%%%%%%%%%%%%%%%%%%%%%

\section{Splittable and unsplittable cyclic $\boldsymbol{(n_3)}$ configurations}

We used a computer program to analyse all cyclic $(n_3)$ configurations for $7 \leq n \leq 30$
(see Table~\ref{tbl:numericsplit}, Table~\ref{default} and Table~\ref{default22}). In \cite{HMP} it was shown that cyclic Haar graphs contain all information about cyclic combinatorial configurations. In trivalent case 
combinatorial isomorphisms of cyclic configurations are well-understood; see \cite{KKP}. 
Namely, it is known how to obtain all sets of parameters of isomorphic cyclic Haar graphs.
We would like to draw the reader's attention to the manuscript \cite{kkmm}, where the main result of \cite{KKP}
is extended to cyclic $(n_k)$ configurations for all $k > 3$.
One would expect that large sparse graphs are splittable.
In this sense the following result is not a surprise:

\begin{theorem}
\label{lem:tech}
Let $H(n,\{0,a,b\})$ be a cyclic Haar graph, where $0 < a < b$. Let
\begin{align*}
\mathcal{W} & = \{ 0, a, b, 2b, b + a, b - a, 2b - a, 2b - 2a, 3b - a, 3b - 2a, 2b + a, 3b \} \\
\mathcal{B} & = \{ 0, a, b, 2b, b + a, b - a, 2b - a, 2b - 2a, 3b - a, 3b - 2a, -a, b - 2a \}
\end{align*}
be multisets with elements from $\mathbb{Z}_n$. If all elements of $\mathcal{W}$ are distinct and all elements of $\mathcal{B}$ are distinct, i.e.\ $\mathcal{W}$ and $\mathcal{B}$ are ordinary sets, $|\mathcal{W}| = |\mathcal{B}| = 12$,
then $H(n, \{0, a, b\})$ is splittable and
$$
\Sigma = \{ 0^+, 2b^+, (2b - 2a)^+, (b - a)^-, (b + a)^-, (3b - a)^- \}
$$
is a splitting set for $H(n, \{0, a, b\})$.
\end{theorem}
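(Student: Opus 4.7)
The plan has two parts, corresponding to the two conditions in the definition of splittability: (A) show that $\Sigma$ is independent in the Gr\"unbaum graph $H^2$, and (B) show that $H - \Sigma$ is disconnected. For (B), I would propose the candidate ``small side''
\[
T = \{b^+, (b-a)^+, (2b-a)^+, b^-, (2b-a)^-, (2b)^-\},
\]
and verify that $\Sigma$ is precisely the set of $H$-neighbors of $T$ lying outside $T$. Using the adjacency rule $i^+ \sim (i+k)^-$ for $k \in \{0,a,b\}$, a direct enumeration shows that for each of the six vertices of $T$, two of its three neighbors stay inside $T$ (so $H[T]$ is in fact a $6$-cycle) and the third lies in $\Sigma$. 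Hence $T$ is a union of connected components of $H - \Sigma$. The twelve labels in $\Sigma \cup T$ are $\{0, b-a, b, 2b-2a, 2b-a, 2b\}$ on the $+$ side and $\{b-a, b, b+a, 2b-a, 2b, 3b-a\}$ on the $-$ side; both are subsets of $\mathcal{W} \cap \mathcal{B}$, hence pairwise distinct by hypothesis. Since $|\mathcal{W}| = 12$ forces $n \ge 12$, we have $2n \ge 24 > 12 = |\Sigma \cup T|$, so $V(H) \setminus (\Sigma \cup T)$ is nonempty and $H - \Sigma$ is disconnected.

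For (A), the bipartite structure of $H$ yields three clean distance criteria: $d_H(i^+, j^+) \le 2$ iff $j - i \in \{0, \pm a, \pm b, \pm(b-a)\}$; the same holds for two $-$ vertices; and $d_H(i^+, j^-) \le 2$ iff $j - i \in \{0, a, b\}$ (bipartiteness forbids length-$2$ paths between the two color classes). I would then enumerate the $\binom{6}{2} = 15$ pairs of vertices in $\Sigma$ (three pairs of $+$ vertices, three of $-$ vertices, and nine mixed pairs) and reduce each independence check to an inequality of the form $\alpha a + \beta b \ne 0$ in $\ZZ_n$. Each such inequality is then discharged by exhibiting a pair of elements of $\mathcal{W}$ or $\mathcal{B}$ whose difference equals $\alpha a + \beta b$.

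The main obstacle is the bookkeeping in (A): the sets $\mathcal{W}$ and $\mathcal{B}$ have been carefully engineered so that their pairwise distinctness encodes exactly the required inequalities, but several of the matches are not transparent. Immediate cases such as $b + a \ne 0$ follow from $b + a, 0 \in \mathcal{W}$, and $2b + a \ne 0$ from $2b + a, 0 \in \mathcal{W}$; less obvious ones include $b + 2a \ne 0$, captured by $b + a, -a \in \mathcal{B}$ (their difference is $b + 2a$), $3a \ne 0$, captured by $b + a, b - 2a \in \mathcal{B}$, and $3b \ne 3a$ (needed when comparing the $+$-pair difference $2b - 2a$ against $a - b$), captured by $3b - 2a, a \in \mathcal{B}$. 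Once this tabulation is completed — roughly thirty distinct inequalities, all accounted for — parts (A) and (B) combine to yield the theorem.
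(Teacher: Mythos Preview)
Your proposal is correct and follows exactly the paper's approach: the paper identifies the same 6-cycle $T=\{b^+,(b-a)^+,(2b-a)^+,b^-,2b^-,(2b-a)^-\}$ as the component cut off by $\Sigma$, and asserts (by reference to a figure) that no two vertices of $\Sigma$ share a neighbor. You have simply written out the verifications that the paper leaves implicit; in particular your reduction of the $15$ independence checks to differences witnessed inside $\mathcal{W}$ or $\mathcal{B}$ is the intended mechanism and is carried out correctly.
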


\begin{proof}
See Figure~\ref{fig:hexGeneral}. If $\mathcal{W}$ and $\mathcal{B}$ are ordinary sets then the graph in Figure~\ref{fig:hexGeneral} is a subgraph of $H(n,\{0,a,b\})$.
It is easy to see that $\Sigma$ is a splitting set. The set $\Sigma$ is indeed an independent set in the square of the graph $H(n,\{0,a,b\})$ since no two vertices of $\Sigma$ are adjacent to the same vertex.
In order to see that the subgraph obtained by removing the vertices of $\Sigma$ is disconnected, note that one
of the connected components is the cycle determined by vertices $\{ b^+, (b - a)^+, (2b - a)^+, b^-, 2b^-, (2b - a)^- \}$.
\begin{figure}[!htbp]
\centering
\usetikzlibrary{calc}
\begin{tikzpicture}
\tikzstyle{white} = [inner sep=1.5, draw, circle, fill=white]
\tikzstyle{black} = [inner sep=1.5, draw, circle, fill=black]
\tikzstyle{edge} = [draw, line width=0.8, color=black]
\node[white,label=-30:$(2b-a)^-$] (7m) at (0, 0) {};
\node[black,label=30:$(b-a)^+$] (3p) at (30:1.5) {};
\node[white,label=-30:$b^-$] (4m) at ($ (3p) + (-30:1.5) $) {};
\node[black,label=30:$(2b-a)^+$] (7p) at (-90:1.5) {};
\node[white,label=-30:$2b^-$] (8m) at ($ (-90:1.5) + (-30:1.5) $) {};
\node[black,label=30:$b^+$] (4p) at ($ (4m) + (-90:1.5) $) {};
\draw[edge] (8m) -- (7p) -- (7m) -- (3p) -- (4m) -- (4p) -- (8m);
\node[white,label=-30:$(b-a)^-$] (3m) at ($ (3p) + (90:1.5) $) {};
\node[black,label=30:$0^+$] (0p) at ($ (4m) + (30:1.5) $) {};
\node[white,label=-30:$(b+a)^-$] (5m) at ($ (4p) + (-30:1.5) $) {};
\node[black,label=30:$2b^+$] (8p) at ($ (8m) + (-90:1.5) $) {};
\node[white,label=-30:$(3b-a)^-$] (11m) at ($ (7p) + (210:1.5) $) {};
\node[black,label=30:$(2b-2a)^+$] (6p) at ($ (7m) + (150:1.5) $) {};
\draw[edge] (11m) -- (7p);
\draw[edge] (3m) -- (3p);
\draw[edge] (4m) -- (0p);
\draw[edge] (5m) -- (4p);
\draw[edge] (8m) -- (8p);
\draw[edge] (7m) -- (6p);
\node[black,label=-180:$(b-2a)^+$] (2p) at ($ (3m) + (150:1.5) $) {};
\node[black,label=0:$-a^+$] (nm) at ($ (3m) + (30:1.5) $) {};
\draw[edge] (2p) -- (3m) -- (nm);
\node[white,label=0:$a^-$] (1m) at ($ (0p) + (90:1.5) $) {};
\node[white,label=0:$0^-$] (0m) at ($ (0p) + (-30:1.5) $) {};
\draw[edge] (1m) -- (0p) -- (0m);
\node[black,label=0:$a^+$] (1p) at ($ (5m) + (30:1.5) $) {};
\node[black,label=0:$(b+a)^+$] (5p) at ($ (5m) + (-90:1.5) $) {};
\draw[edge] (1p) -- (5m) -- (5p);
\node[white,label=0:$3b^-$] (9m) at ($ (8p) + (-30:1.5) $) {};
\node[white,label=180:$(2b+a)^-$] (12m) at ($ (8p) + (210:1.5) $) {};
\draw[edge] (12m) -- (8p) -- (9m);
\node[black,label=180:$(3b-a)^+$] (11p) at ($ (11m) + (-90:1.5) $) {};
\node[black,label=180:$(3b-2a)^+$] (10p) at ($ (11m) + (150:1.5) $) {};
\draw[edge] (11p) -- (11m) -- (10p);
\node[white,label=180:$(3b-2a)^-$] (10m) at ($ (6p) + (90:1.5) $) {};
\node[white,label=180:$(2b-2a)^-$] (6m) at ($ (6p) + (210:1.5) $) {};
\draw[edge] (10m) -- (6p) -- (6m);
\end{tikzpicture}
\caption{The set $\Sigma = \{ 0^+, 2b^+, (2b - 2a)^+, (b - a)^-, (b + a)^-, (3b - a)^-\}$ is a splitting set for $H(n,\{0,a,b\})$.}
\label{fig:hexGeneral}
\end{figure}
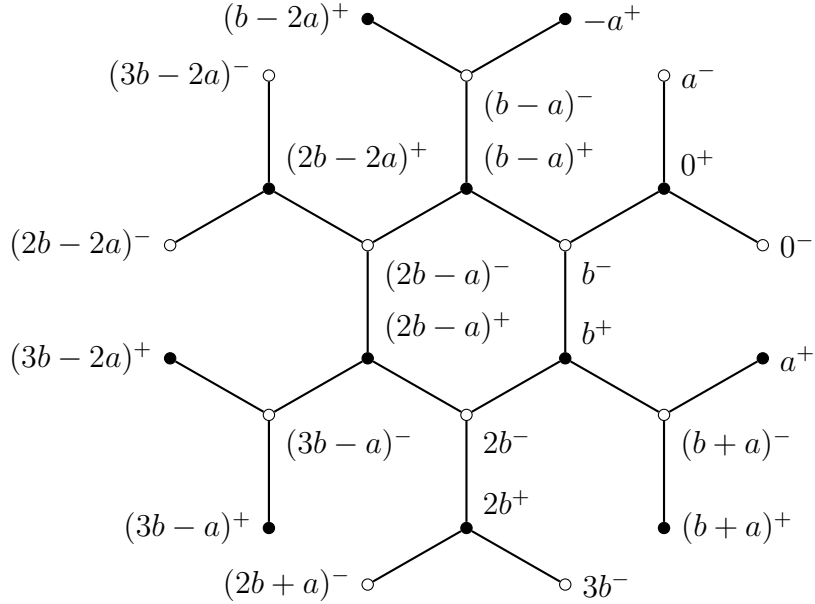
\end{proof}

\begin{corollary}
\label{cor:g6}
Under conditions of Theorem~\ref{lem:tech}, the girth of the graph $H(n,\{0,a,b\})$ is~6.
\end{corollary}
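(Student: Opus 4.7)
The plan is to pin the girth to exactly $6$ by producing a $6$-cycle (upper bound) and ruling out all $4$-cycles (lower bound). Since $H(n,\{0,a,b\})$ is bipartite, its girth is automatically even and at least $4$, so these two steps suffice.

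For the upper bound, I reuse the hexagon already drawn in Figure~\ref{fig:hexGeneral} during the proof of Theorem~\ref{lem:tech}: namely the cycle
\[
b^+ - b^- - (b-a)^+ - (2b-a)^- - (2b-a)^+ - 2b^- - b^+,
\]
identified there as a connected component of $H(n,\{0,a,b\}) - \Sigma$. Every edge is of the form $i^+(i+k)^-$ with $k \in \{0,a,b\}$, so each edge is truly present in $H(n,\{0,a,b\})$. Its six vertex indices lie in $\mathcal{W}$, so the hypothesis $|\mathcal{W}| = 12$ guarantees that the six vertices are pairwise distinct and that the closed walk is an honest $6$-cycle.

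For the lower bound, I would translate the existence of a $4$-cycle $u_i - v_j - u_{i'} - v_{j'} - u_i$ in $H(n,S)$ into the combinatorial condition that there is a quadruple $(d_1,d_2,d_3,d_4) \in S^4$ with $d_1 \neq d_2$, $d_1 \neq d_3$, and $d_1 + d_4 = d_2 + d_3$ in $\mathbb{Z}_n$. For $S = \{0,a,b\}$ this reduces to a small case analysis: three choices for $d_1$, at most four assignments for $(d_2,d_3) \in (S\setminus\{d_1\})^2$, and for each the value $d_4 = d_2 + d_3 - d_1$ is forced and must itself lie in $S$. A short bookkeeping shows that every such quadruple forces at least one of the relations
\[
2a \equiv 0,\quad 2b \equiv 0,\quad a+b \equiv 0,\quad b \equiv 2a,\quad a \equiv 2b,\quad 2a \equiv 2b \pmod{n}.
\]
Each of these in turn collapses a pair of elements of $\mathcal{W}$: respectively $\{b+a, b-a\}$, $\{0, 2b\}$, $\{0, b+a\}$, $\{a, b-a\}$, $\{a, 2b\}$, and $\{0, 2b-2a\}$. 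Since $|\mathcal{W}| = 12$ by hypothesis, none of these relations can hold, and hence no $4$-cycle exists.

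The principal effort is the $4$-cycle enumeration, but because $|S| = 3$ it reduces to only a handful of cases; I expect no genuine obstacle beyond this bookkeeping, and no input beyond the distinctness of $\mathcal{W}$ already guaranteed by Theorem~\ref{lem:tech}'s hypothesis is needed.
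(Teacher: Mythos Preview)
Your argument is correct. The upper bound is identical to the paper's: both exhibit the same hexagon from Figure~\ref{fig:hexGeneral}. For the lower bound the paper takes a slightly different organizational route: rather than enumerating all quadruples $(d_1,d_2,d_3,d_4)$, it invokes the vertex-transitivity of the cyclic Haar graph on each colour class to reduce to a single black vertex $b^+$, lists its three white neighbours $b^-,(b+a)^-,2b^-$, and observes that no two of them share a second common neighbour. Unwinding that observation produces exactly your six forbidden congruences, and the paper (implicitly) rules them out via the distinctness of the corresponding indices in $\mathcal{B}$ rather than $\mathcal{W}$. So the underlying arithmetic is the same; the paper's symmetry reduction is a little more economical, while your direct enumeration makes the dependence on the hypothesis $|\mathcal{W}|=12$ fully explicit. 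One small point: when you say ``its six vertex indices lie in $\mathcal{W}$'', note that the three $+$-vertices and three $-$-vertices are automatically distinct from one another by bipartiteness, so you only need the three black indices $b,\,b-a,\,2b-a$ and the three white indices $b,\,2b-a,\,2b$ to be distinct within their respective colour classes---which, as you say, follows from the hypothesis.
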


\begin{proof}
The girth of such a graph is at most 6 because it contains a $6$-cycle (see Figure~\ref{fig:hexGeneral}). It is easy to
see that the girth cannot be $4$. Because the graph $H(n,\{0,a,b\})$ is bipartite, each 4-cycle must contain a black vertex.
Consider vertex $b^+$ in Figure~\ref{fig:hexGeneral}. Its neighborhood is $\{ b^-, 2b^-, (b+a)^- \}$. None of those
vertices have a common neighbor, so $b^+$ does not belong to any 4-cycle. Because of symmetry this argument holds for all
black vertices.
\end{proof}

\begin{corollary}
\label{thm:5}
There exist infinitely many cyclic $(n_3)$ configurations that are splittable. For example,
the following three families of cyclic Haar graphs are splittable:
\begin{enumerate}
\renewcommand{\labelenumi}{(\alph{enumi})}
\item $H(n,\{0,1,4\})$ for $n \ge 13$,
\item $H(n,\{0,1,5\})$ for $n \ge 16$, and
\item $H(n,\{0,2,5\})$ for $n \ge 16$.
\end{enumerate}
\end{corollary}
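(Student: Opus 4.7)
The plan is to derive Corollary~\ref{thm:5} as an immediate application of Theorem~\ref{lem:tech}. For each of the three families I substitute the prescribed values of $a$ and $b$, write out the twelve residues listed in $\mathcal{W}$ and $\mathcal{B}$, and verify that once $n$ reaches the stated threshold all twelve elements in each multiset are pairwise distinct modulo $n$. Theorem~\ref{lem:tech} then supplies a splitting set, Corollary~\ref{cor:g6} certifies that the resulting graph has girth $6$ and is therefore the Levi graph of an actual cyclic $(n_3)$ configuration, and the infinitude claim follows because each family provides one configuration (of a distinct order) for every admissible $n$.

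For family~(a), taking $a=1$, $b=4$, the twelve entries of $\mathcal{W}$ sort into $\{0,1,3,4,5,6,7,8,9,10,11,12\}$ and those of $\mathcal{B}$ into $\{-1,0,1,2,3,4,5,6,7,8,10,11\}$; both lists lie in a window of width $13$, so $n \geq 13$ prevents any collision modulo $n$. For family~(b), $a=1$, $b=5$ yields $\mathcal{W} = \{0,1,4,5,6,8,9,10,11,13,14,15\}$ and $\mathcal{B} = \{-1,0,1,3,4,5,6,8,9,10,13,14\}$, and for family~(c), $a=2$, $b=5$ yields $\mathcal{W} = \{0,2,3,5,6,7,8,10,11,12,13,15\}$ and $\mathcal{B} = \{-2,0,1,2,3,5,6,7,8,10,11,13\}$; in both cases the entries fit in a window of width $16$, so $n \geq 16$ suffices.

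There is no real conceptual obstacle here; the three symbols have been chosen precisely so that all twelve required residues fit into a short interval of $\mathbb{Z}$. The only point requiring attention is the negative entry $-a$ in $\mathcal{B}$, which modulo $n$ lands at $n-a$ and is exactly what dictates the lower bounds $n \geq 13$ and $n \geq 16$ in the three cases; the remaining entries are non-negative and cause no trouble as long as $n$ exceeds the largest of them. Once the corresponding inequality is imposed, the hypotheses of Theorem~\ref{lem:tech} hold and the splitting set $\Sigma$ displayed there establishes splittability, completing the proof.
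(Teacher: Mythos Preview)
Your argument is correct and follows essentially the same route as the paper: both proofs plug the three symbol sets into Theorem~\ref{lem:tech}, invoke Corollary~\ref{cor:g6} for girth~$6$, and read off the resulting splitting set. You are somewhat more explicit in listing the twelve residues and observing that they fit into an interval of length~$13$ (respectively~$16$), whereas the paper simply records the concrete splitting sets $\Sigma$ obtained from Theorem~\ref{lem:tech}; the underlying verification is the same.
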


\begin{proof}
Corollary~\ref{cor:g6} implies that each graph from any of the three families has girth 6.
From Theorem~\ref{lem:tech} it follows that $\Sigma = \{0^+,6^+,8^+,3^-,5^-,11^-\}$ is a splitting set for $H(n,\{0,1,4\})$ if $n \ge 13$ (see Figure~\ref{fig:hex}), $\{0^+, 8^+, 10^+, 4^-, 6^-, 14^-\}$
is a splitting set for $H(n,\{0,1,5\})$ if $n \ge 16$, and  $\{ 0^+, 6^+, 10^+, 3^-, 7^-, 13^-\}$ is a splitting set for $H(n,\{0,2,5\})$ if $n \geq 16$.
\end{proof}

If $n < 13$ then conditions of Theorem~\ref{lem:tech} are not fulfilled. If $n = 12$ then $(n-1)^+ = 11^+$ which means that the
vertices of the graph in Figure~\ref{fig:hex} are not all distinct. If $n = 9$ then $9^- = 0^-$ since we work with $\mathbb{Z}_9$.
Similar arguments can be made if $n < 16$ in the case of the other two families from Corollary~\ref{thm:5}.
\begin{figure}
\centering
\usetikzlibrary{calc}
\begin{tikzpicture}
\tikzstyle{white} = [inner sep=1.5, draw, circle, fill=white]
\tikzstyle{black} = [inner sep=1.5, draw, circle, fill=black]
\tikzstyle{edge} = [draw, line width=0.8, color=black]
\node[white,label=-30:$7^-$] (7m) at (0, 0) {};
\node[black,label=30:$3^+$] (3p) at (30:1.5) {};
\node[white,label=-30:$4^-$] (4m) at ($ (3p) + (-30:1.5) $) {};
\node[black,label=30:$7^+$] (7p) at (-90:1.5) {};
\node[white,label=-30:$8^-$] (8m) at ($ (-90:1.5) + (-30:1.5) $) {};
\node[black,label=30:$4^+$] (4p) at ($ (4m) + (-90:1.5) $) {};
\draw[edge] (8m) -- (7p) -- (7m) -- (3p) -- (4m) -- (4p) -- (8m);
\node[white,label=-30:$3^-$] (3m) at ($ (3p) + (90:1.5) $) {};
\node[black,label=30:$0^+$] (0p) at ($ (4m) + (30:1.5) $) {};
\node[white,label=-30:$5^-$] (5m) at ($ (4p) + (-30:1.5) $) {};
\node[black,label=30:$8^+$] (8p) at ($ (8m) + (-90:1.5) $) {};
\node[white,label=-30:$11^-$] (11m) at ($ (7p) + (210:1.5) $) {};
\node[black,label=30:$6^+$] (6p) at ($ (7m) + (150:1.5) $) {};
\draw[edge] (11m) -- (7p);
\draw[edge] (3m) -- (3p);
\draw[edge] (4m) -- (0p);
\draw[edge] (5m) -- (4p);
\draw[edge] (8m) -- (8p);
\draw[edge] (7m) -- (6p);
\node[black,label=-180:$2^+$] (2p) at ($ (3m) + (150:1.5) $) {};
\node[black,label=0:$(n-1)^+$] (nm) at ($ (3m) + (30:1.5) $) {};
\draw[edge] (2p) -- (3m) -- (nm);
\node[white,label=0:$1^-$] (1m) at ($ (0p) + (90:1.5) $) {};
\node[white,label=0:$0^-$] (0m) at ($ (0p) + (-30:1.5) $) {};
\draw[edge] (1m) -- (0p) -- (0m);
\node[black,label=0:$1^+$] (1p) at ($ (5m) + (30:1.5) $) {};
\node[black,label=0:$5^+$] (5p) at ($ (5m) + (-90:1.5) $) {};
\draw[edge] (1p) -- (5m) -- (5p);
\node[white,label=0:$9^-$] (9m) at ($ (8p) + (-30:1.5) $) {};
\node[white,label=180:$12^-$] (12m) at ($ (8p) + (210:1.5) $) {};
\draw[edge] (12m) -- (8p) -- (9m);
\node[black,label=180:$11^+$] (11p) at ($ (11m) + (-90:1.5) $) {};
\node[black,label=180:$10^+$] (10p) at ($ (11m) + (150:1.5) $) {};
\draw[edge] (11p) -- (11m) -- (10p);
\node[white,label=180:$10^-$] (10m) at ($ (6p) + (90:1.5) $) {};
\node[white,label=180:$6^-$] (6m) at ($ (6p) + (210:1.5) $) {};
\draw[edge] (10m) -- (6p) -- (6m);
\end{tikzpicture}
\caption{The set $\Sigma = \{0^+,6^+,8^+,3^-,5^-,11^-\}$ is a splitting set for $H(n,\{0,1,4\})$ where $n \ge 13$.}
\label{fig:hex}
\end{figure}
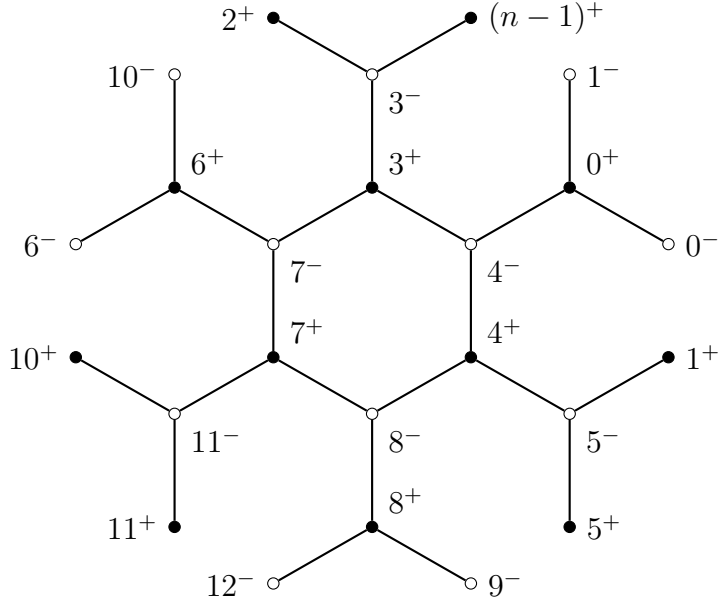

We investigated the first 100 graphs from the $H(n,\{0,1,4\})$ family. All but two are zero symmetric, nowadays called graphical
regular representation or GRR for short (see~\cite{CFP}). The exceptions are for $n = 13$ and $n = 15$.

By Corollary~\ref{thm:5}, there are infinitely many splittable $(n_3)$ configurations.
However, we are also able to show that there is no upper bound on the number of vertices of unsplittable $(n_3)$ configurations:

\begin{theorem}
\label{thm:infunsp}
There exist infinitely many cyclic $(n_3)$ configurations that are unsplittable.
\end{theorem}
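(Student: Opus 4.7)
The plan is to exhibit an explicit infinite family of symbols $S\subset\ZZ_n$ for which $H(n,S)$ is an unsplittable cyclic $(n_3)$ configuration. The search is guided by Theorem~\ref{lem:tech}: one looks for symbols $S=\{0,a,b\}$ in which the multisets $\mathcal{W}$ and $\mathcal{B}$ have coincidences, so that the hexagonal splitting of Figure~\ref{fig:hexGeneral} is unavailable. Since the smallest unsplittable examples, the Fano plane $H(7,\{0,1,3\})$ and the Möbius-Kantor configuration $H(8,\{0,1,3\})$, both use $a=1$, $b=3$, the symbol $\{0,1,3\}$ (or a small variant suggested by the computational tables of Section~3) is the natural candidate to generate an infinite family as $n$ ranges over an appropriate infinite subset of the integers.

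The combinatorial prerequisites are routine. For $S=\{0,1,3\}$ the nonzero differences $\{\pm 1,\pm 2,\pm 3\}$ are six distinct elements of $\ZZ_n$ whenever $n\ge 7$, ruling out $4$-cycles; an explicit $6$-cycle then confirms that the girth equals $6$, so $H(n,S)$ is the Levi graph of an $(n_3)$ configuration. One would also verify $3$-connectedness of the Levi graph, which is necessary for unsplittability by Theorem~\ref{thm:unsplit3Conn}.

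The crux of the proof is to rule out every potential splitting set $\Sigma$. Exploiting the cyclic $\ZZ_n$-action one normalises so that $0^+\in\Sigma$; independence of $\Sigma$ in the Grünbaum graph then forces every further element of $\Sigma$ to lie outside the closed ball of radius $2$ around $0^+$ in the Levi graph, a strong local sparsity condition. The key step is a uniform \emph{window lemma}: up to a $\ZZ_n$ translation, every minimal independent vertex cut of $H(n,S)$ fits inside a segment of $\ZZ_n$ of length bounded by a constant depending only on $|S|$. This reduces the problem to a finite case analysis, in which each residual candidate is refuted either by exhibiting two of its vertices that are already adjacent in the square (violating independence) or by displaying a surviving alternating path through the rest of the cycle (violating the cut property).

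The hard part is the window lemma, which bridges a local condition (independence in $L(\C)^2$) with a global one (being a vertex cut in $L(\C)$). A plausible route is a Menger-style argument: in a $3$-connected, $3$-regular, bipartite graph of girth $6$, every minimum vertex cut is essentially the neighborhood of a small connected set, and for a sparse symbol such as $\{0,1,3\}$ every such neighborhood already contains two vertices at distance $2$ in the Levi graph, contradicting the required independence in the square. Making this argument uniform across the infinite family is where the technical work will concentrate.
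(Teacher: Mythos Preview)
Your choice of the family $H(n,\{0,1,3\})$ for $n\ge 7$ is exactly right, and the girth computation is fine. However, the heart of your argument---the ``window lemma'' asserting that every minimal independent cut lives in a bounded segment of $\ZZ_n$---is stated but not proved, and the Menger route you propose targets the wrong object: a splitting set $\Sigma$ need only be \emph{some} independent set in the square whose removal disconnects the Levi graph, not a minimum vertex cut, so structural theorems about minimum cuts in $3$-connected cubic graphs do not transfer. As written, the proposal is a strategy sketch with the decisive step explicitly deferred (``the technical work will concentrate'' there), so it is not yet a proof.

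The paper bypasses all of this with a short, direct argument exploiting the Hamiltonian structure. Writing $X=H(n,\{0,1,3\})$ as $\mathrm{LCF}[5,-5]^n$, the edges coming from symbols $0$ and $1$ form a Hamiltonian cycle and the symbol-$3$ edges are chords of length~$5$ along it. If $\Sigma$ were a splitting set, deleting it would break the cycle into paths, each with at least two vertices (consecutive cycle vertices are adjacent, so cannot both lie in~$\Sigma$). Step~1: whenever two consecutive paths do not both consist of exactly two vertices, a length-$5$ chord joins them in $X-\Sigma$; hence if no two consecutive segments are that short, all segments merge into one component and $\Sigma$ is not a cut. Step~2: two consecutive two-vertex segments would force cycle positions $i-3,\,i,\,i+3$ into $\Sigma$, but $i-3$ and $i+3$ are at distance~$2$ in $X$ (one chord plus one cycle edge), contradicting independence of $\Sigma$ in $X^2$. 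No window lemma, no Menger, and no finite case analysis are needed.
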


\begin{proof}
We use the cyclic Haar graphs $X = H(n,\{0,1,3\})$, where $n \ge 7$. Clearly, each of them has girth 6. The graph can be written as $\mathrm{LCF}[5,-5]^n$. (For the LCF notation see~\cite{PS}.)  This means that the edges determined by symbols 0 and 1 form a Hamiltonian cycle while the edges arising from the symbol 3 form chords of length~5. See Figure~\ref{fig:3connSplittable} for an example.

Let us assume the result does not hold. This means there exists a splitting set $\Sigma$. By
removing $\Sigma$ from the graph the Hamiltonial cycle breaks into paths. Each path must contain at least two vertices. Let the sequence $\Pi = (p_1, p_2 ,\ldots, p_k)$ denote the lengths of the consecutive paths along the Hamiltonial cycle. The rest of the proof is in two steps:

\textbf{Step 1.}  If there are no two consecutive numbers of $\Pi$ equal to 2, then the corresponding segments are connected in $X - \Sigma$  since there is a chord of length 5 joining these two segments. But this means that all paths are connected by chords, so $\Sigma$ is not a
splitting set.

\textbf{Step 2.}  We can show that no two consecutive segments are of length 2. In case
of two adjacent segments of length 2 we would have vertices $\{i-3,i,i+3 \} \subseteq \Sigma$.  But that is
impossible, since $i-3$ is adjacent to $i+3$ in $X^2$.
\end{proof}

Note that this is not the only such family. Here is another one:

\begin{theorem}
Cyclic configurations defined by $H(3n,\{0,1,n\})$, where $n \geq 2$, are unsplittable.
\end{theorem}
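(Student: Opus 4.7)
The plan is to combine the Hamiltonian-cycle idea of Theorem~\ref{thm:infunsp} with a complementary $6$-cycle decomposition of $X := H(3n,\{0,1,n\})$. The edges of symbols $0$ and $1$ form a Hamiltonian cycle $C$ of length $6n$ (with $k^{-}$ at position $2k$ and $k^{+}$ at position $2k{+}1$), while the $3n$ edges of symbol $n$ give chords of cycle-length $2n-1$. Complementarily, the edges of symbols $0$ and $n$ partition $V(X)$ into $n$ disjoint $6$-cycles $C_{0},\dots,C_{n-1}$, where $C_{j}=\{j^{\pm},(j{+}n)^{\pm},(j{+}2n)^{\pm}\}$ traversed as $j^{+}, j^{-}, (j{+}2n)^{+}, (j{+}2n)^{-}, (j{+}n)^{+}, (j{+}n)^{-}$; the edges of symbol $1$ then provide three parallel ``matching'' edges between each consecutive pair $C_{j}$ and $C_{j+1 \bmod n}$, one from each black vertex of $C_j$ to a specific white vertex of $C_{j+1}$.

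Suppose for contradiction that $\Sigma$ is a splitting set. Independence of $\Sigma$ in $X^{2}$ forbids pairs of $\Sigma$-vertices at cycle-distance in $\{1,2,2n{-}2,2n{-}1,2n\}$ (mod $6n$). Since the square of a $6$-cycle has maximum independent set of size $2$ (a single antipodal pair), $|\Sigma \cap C_{j}| \le 2$ for every $j$, with equality forcing an antipodal pair of one black and one white vertex in $C_{j}$. A key sub-lemma is that $\Sigma$ cannot contain antipodal pairs in two consecutive cycles $C_{j}, C_{j+1 \bmod n}$: a short finite check of the $3 \times 3$ combinations of antipodal pairs shows that every such pairing places two $\Sigma$-vertices within $X$-distance $2$, via basic index differences together with the identity $(2n{+}1) \equiv -(n{-}1) \pmod{3n}$.

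I then show that $X - \Sigma$ must be connected, which is the sought contradiction. If $|\Sigma \cap C_{j}| \le 1$ then $C_{j} \setminus \Sigma$ is connected (a $6$-cycle or a $5$-path). If $|\Sigma \cap C_{j}| = 2$ is antipodal, then $C_{j} \setminus \Sigma$ splits into two components, each consisting of one black and one white vertex joined by an edge; a distance-check in the same style as the sub-lemma rules out from $\Sigma$ every one of the four candidate vertices in $C_{j \pm 1}$ (the two destinations in $C_{j+1}$ and the two sources in $C_{j-1}$ of the inter-cycle matching edges leaving these two components). Consequently each component of $C_{j} \setminus \Sigma$ retains both its outgoing edge to $C_{j+1}$ and its incoming edge from $C_{j-1}$; combined with the sub-lemma (which forces $|\Sigma \cap C_{j \pm 1}| \le 1$ and hence connected remnants of $C_{j \pm 1}$), both components of $C_{j} \setminus \Sigma$ end up in the same component of $X - \Sigma$. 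Iterating this around the cyclic arrangement $C_{0}, C_{1}, \ldots, C_{n-1}$ yields the connectivity of $X - \Sigma$. The principal obstacle is performing the last distance-check uniformly for all three antipodal-pair choices in every $C_{j}$; this mirrors but extends the computation already needed for the sub-lemma and should go through by symmetry.
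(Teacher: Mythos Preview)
Your approach is correct and genuinely different from the paper's. The paper follows the template of Theorem~\ref{thm:infunsp}: it keeps the Hamiltonian cycle coming from symbols $0$ and $1$, assumes some $0^{+}\in\Sigma$, lists the vertices forced out of $\Sigma$ by the distance-$2$ constraint, and then by a short two-case analysis (on whether $(2n{+}1)^{-}\in\Sigma$) exhibits an explicit path in $X-\Sigma$ from $0^{-}$ to $1^{-}$, thereby reconnecting the two Hamiltonian arcs on either side of $0^{+}$. Your argument instead exploits the complementary decomposition by symbols $0$ and $n$ into the $n$ disjoint hexagons $C_{0},\dots,C_{n-1}$, bounds $|\Sigma\cap C_{j}|\le 2$ via the square of a $6$-cycle, rules out antipodal pairs in consecutive hexagons, and then checks that every piece of $C_{j}\setminus\Sigma$ retains a live matching edge to its neighbours.

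Both routes need a small finite verification; the paper's is shorter because it handles a single removed vertex with one binary case split, whereas you must cover the $3\times 3$ antipodal-pair combinations and the four neighbouring endpoints. Your worry about doing the last distance-check ``uniformly for all three antipodal-pair choices'' is unfounded: the shift $i\mapsto i+n$ is an automorphism of $X$ that fixes each $C_{j}$ setwise while cyclically permuting its three antipodal pairs, so one representative suffices. What your decomposition buys is a clearer structural picture of $H(3n,\{0,1,n\})$ as $n$ hexagons linked by a perfect matching of triples, which may adapt more readily to related families; what the paper's buys is brevity and a uniform template shared with the $H(n,\{0,1,3\})$ case.
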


\begin{proof}
The technique used here is similar to the technique used in proof of Theorem~\ref{thm:infunsp}. Let $X = H(3n,\{0,1,n\})$. The graph $X$ can be written as $\mathrm{LCF}[2n-1,-(2n-1) ]^{3n}$. Suppose that there exists a splitting set $\Sigma$. The edges determined by symbols 0 and 1 form a Hamiltonian cycle which breaks into paths when the splitting set $\Sigma$ is removed.

We show that any two consecutive paths are connected in $X - \Sigma$. Without loss of generality (because of symmetry), we may assume that $0^+ \in \Sigma$ is the vertex adjacent to the two paths under consideration. If $0^+ \in \Sigma$ then $1^-, 0^-, n^-, n^+, 1^+, 2n^+, (2n+1)^+ \notin \Sigma$. We show that vertices $1^-$ and $0^-$ (which belong to the two paths under consideration) are connected in $X - \Sigma$.

If $(2n+1)^- \notin \Sigma$ then $2n^+$ and $(2n+1)^+$ are connected in $X - \Sigma$. Since $0^-$ is adjacent to $2n^+$ and $1^-$ is adjacent to $(2n+1)^+$, vertices $0^-$ and $1^-$ are also connected in $X - \Sigma$. Now, suppose that $(2n+1)^- \in \Sigma$. This implies that $2n^-, (n+1)^+ (n+1)^- \notin \Sigma$. Then $2n^+$ is adjacent to $2n^-$, $2n^-$ is adjacent to $n^+$, $n^+$ is adjacent to $(n+1)^-$, $(n+1)^-$ is adjacent
to $1^+$, and $1^+$ is adjacent to $1^-$ in $X - \Sigma$. Therefore, $1^-$ and $0^-$ are connected in $X - \Sigma$.
\end{proof}

\begin{table}[htbp]
\centering
\caption{Overview of splittable and unsplittable connected cyclic Haar graphs.}
\label{tbl:numericsplit}

\vspace{0.5\baselineskip}
\begin{tabular}{c|c|c|c|c|c|c}
n & (a) & (b) & (c) & (d) & (e) & (f) \\
\hline
\hline
3 & 1 & 0 & 0 & 1 & 0 & 0 \\
4 & 1 & 0 & 0 & 1 & 0 & 0 \\
5 & 1 & 0 & 0 & 1 & 0 & 0 \\
6 & 2 & 0 & 0 & 2 & 0 & 0 \\
7 & 2 & 1 & 0 & 2 & 0 & 1 \\
8 & 3 & 1 & 1 & 2 & 0 & 1 \\
9 & 2 & 1 & 0 & 2 & 0 & 1 \\
10 & 3 & 1 & 1 & 2 & 0 & 1 \\
11 & 2 & 1 & 0 & 2 & 0 & 1 \\
12 & 5 & 3 & 1 & 4 & 0 & 3 \\
13 & 3 & 2 & 1 & 2 & 1 & 1 \\
14 & 4 & 2 & 2 & 2 & 1 & 1 \\
15 & 5 & 4 & 1 & 4 & 1 & 3 \\
16 & 5 & 3 & 3 & 2 & 2 & 1 \\
17 & 3 & 2 & 1 & 2 & 1 & 1 \\
18 & 6 & 4 & 3 & 3 & 2 & 2 \\
19 & 4 & 3 & 2 & 2 & 2 & 1 \\
20 & 7 & 5 & 5 & 2 & 4 & 1 \\
21 & 7 & 6 & 3 & 4 & 3 & 3 \\
22 & 6 & 4 & 4 & 2 & 3 & 1 \\
23 & 4 & 3 & 2 & 2 & 2 & 1 \\
24 & 11 & 9 & 7 & 4 & 6 & 3 \\
25 & 5 & 4 & 3 & 2 & 3 & 1 \\
26 & 7 & 5 & 5 & 2 & 4 & 1 \\
27 & 6 & 5 & 3 & 3 & 3 & 2 \\
28 & 9 & 7 & 7 & 2 & 6 & 1 \\
29 & 5 & 4 & 3 & 2 & 3 & 1 \\
30 & 13 & 11 & 9 & 4 & 8 & 3 \\
\end{tabular}

\vspace{0.5\baselineskip}
(a) Number of non-isomorphic connected cubic cyclic Haar graphs on $2n$ vertices.
(b) Those that have girth~6.
(c) Those that are splittable.
(d) Those that are unsplittable.
(e) Those that are splittable of girth 6.
(f) Those that are unsplittable of girth 6.
\end{table}

Cubic symmetric bicirculants were classified in \cite{dmtp2000} and \cite{tomo2007}. These results can be
summarised as follows:

\begin{theorem}[\cite{dmtp2000,tomo2007}]
A connected cubic symmetric graph is a bicirculant if and only if it is isomorphic to one of the following graphs:
\begin{enumerate}
\item the complete graph $K_4$,
\item the complete bipartite graph $K_{3,3}$,
\item the seven symmetric generalized Petersen graphs $GP(4, 1)$, $GP(5, 2)$, $GP(8, 3)$, $GP(10, 2)$, $GP(10, 3)$, $GP(12, 5)$ and $GP(24, 5)$,
\item the Heawood graph $H(7, \{0, 1, 3\})$, and
\item the cyclic Haar graph $H(n, \{0, 1, r+1\})$,  
where $n \geq 11$ is odd and $r \in \mathbb{Z}^*_n$ such
that $r^2 +r + 1 \equiv 0 \pmod n$. 
\end{enumerate}
\end{theorem}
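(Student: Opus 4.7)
The plan is to classify connected cubic graphs $G$ on $2n$ vertices that admit a semiregular automorphism $\rho$ of order $n$ with exactly two orbits $U$ and $V$. Writing $U = \{u_i \mid i \in \ZZ_n\}$ and $V = \{v_i \mid i \in \ZZ_n\}$ so that $\rho$ acts as $u_i \mapsto u_{i+1}$, $v_i \mapsto v_{i+1}$, every edge of $G$ is encoded by three $\rho$-invariant connection sets: a symmetric $S \subseteq \ZZ_n \setminus \{0\}$ describing edges inside $U$, a symmetric $T \subseteq \ZZ_n \setminus \{0\}$ describing edges inside $V$, and $R \subseteq \ZZ_n$ describing matchings $u_i v_{i+r}$ for $r \in R$. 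Cubicity forces $|S| + |R| = |T| + |R| = 3$, so in particular $|S| = |T|$.

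First I would split into cases according to $|R|$. The value $|R| = 0$ is ruled out by connectedness. The case $|R| = 3$ gives $S = T = \emptyset$, so $G$ is the bipartite cyclic Haar graph $H(n,R)$; after translating we may take $0 \in R$ and write $R = \{0,a,b\}$. The case $|R| = 1$ gives $|S| = |T| = 2$ with $S$ and $T$ inverse-closed pairs, and after a suitable relabeling the graph is a generalized Petersen graph $GP(n,k)$ with rim set $S = \{\pm 1\}$ and spoke set $T = \{\pm k\}$. The case $|R| = 2$ forces $|S| = |T| = 1$, hence $n$ is even and $S = T = \{n/2\}$; direct inspection of the resulting ``prism-like'' graphs shows that arc-transitivity collapses them to objects already present in the other lists (notably $K_4$ for $n = 2$).

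Next I would impose the arc-transitivity condition on each surviving case. For the $|R| = 1$ subfamily I would invoke the classical Frucht--Graver--Watkins classification of symmetric generalized Petersen graphs, which yields precisely the seven instances of item (3), together with the small degenerate examples $K_4$ and $K_{3,3}$. For the $|R| = 3$ subfamily $H(n,\{0,a,b\})$, arc-transitivity forces the existence of a multiplier $r \in \ZZ_n^{\ast}$ such that multiplication by $r$ permutes $\{0,a,b\}$ up to an additive translation. Normalizing $a = 1$, this multiplier condition is equivalent to $b = r+1$ together with the arithmetic equation $r^2 + r + 1 \equiv 0 \pmod{n}$; for $n \geq 11$ odd this yields exactly the family in item (5), for $n = 7$ it yields the Heawood graph of item (4), and for $n = 3$ it degenerates to $K_{3,3}$.

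The main obstacle is the arc-transitivity analysis in the Haar-graph case: one must describe the full automorphism group of $H(n,\{0,a,b\})$ with enough precision to rule out \emph{extra} automorphisms not coming from the bicirculant decomposition, and conversely must verify that whenever the congruence $r^2 + r + 1 \equiv 0 \pmod{n}$ holds, the corresponding multiplier really does extend to an arc-transitive action. Both directions are cleanest when carried out via the standard description of $\mathrm{Aut}(H(n,S))$ in terms of translations and multipliers of $\ZZ_n$, combined with the girth-$6$ information from Corollary~\ref{cor:g6}, which constrains the local neighbourhood structure and therefore the shape of arc-stabilizers. Once these algebraic conditions are pinned down, matching parameters against the explicit list reduces to bookkeeping over small residues.
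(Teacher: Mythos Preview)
The paper does not prove this theorem at all: it is quoted verbatim from the references \cite{dmtp2000,tomo2007} and then used as a black box in the proof of Theorem~\ref{thm:main}. There is therefore no ``paper's own proof'' to compare your proposal against.

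On the merits of your sketch: the case split on $|R|$ is the standard entry point, but several steps are only gestures. In the $|R|=1$ case you claim that after relabeling one always lands on a generalized Petersen graph; in fact the bicirculant with $S=\{\pm a\}$, $T=\{\pm b\}$, $R=\{0\}$ is an $I$-graph $I(n,a,b)$, and normalizing to $a=1$ requires $\gcd(a,n)=1$, which is not automatic from connectedness alone. That arc-transitive $I$-graphs are already $GP$-graphs is real content, not bookkeeping. The $|R|=2$ case is waved away by ``direct inspection'', but this family is not finite and needs an argument. Most importantly, the step you yourself flag as the main obstacle---pinning down $\mathrm{Aut}\bigl(H(n,\{0,1,b\})\bigr)$ and proving the equivalence with $r^2+r+1\equiv 0 \pmod n$---is precisely the substance of \cite{tomo2007}; your appeal to Corollary~\ref{cor:g6} does not help, since that corollary only applies under the distinctness hypotheses of Theorem~\ref{lem:tech}, which you have not established for an arbitrary arc-transitive Haar graph. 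So what you have is a correct outline of where the classification comes from, but not a proof.
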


It is well known that an $(n_3)$ configuration is flag-transitive if and only if its Levi graph is cubic symmetric graph of girth at least 6.
From Theorem 11 it follows that the girth of any connected cubic symmetric bicirculant is at most 6.
If the girth of such a graph is 6 or more then it is a Levi graph of a flag-transitive configuration. This enables us to characterise splittability of such configurations:

\begin{theorem}
\label{thm:main}
The Fano plane $(7_3)$, the Möbius-Kantor configuration $(8_3)$,  and the 
Desargues configuration $(10_3)$ are unsplittable. Their Levi graphs are $H(7, \{0, 1, 3\})$, $H(8, \{0, 1, 3\}) \cong GP(8, 3)$ and $GP(10, 3)$, respectively. 

If $n \geq 9$, all flag-transitive $(n_3)$ configurations, except the Desargues configuration, are splittable.
\end{theorem}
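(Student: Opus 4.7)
The plan is to use the classification Theorem~11 of cubic symmetric bicirculants to enumerate the possibilities and then treat each family with the tools already developed. A configuration is flag-transitive exactly when its Levi graph is a cubic symmetric graph of girth at least~6, so after discarding from Theorem~11 the graphs $K_4$, $K_{3,3}$, $GP(4,1)$, $GP(5,2)$ and $GP(10,2)$ (all of girth at most~5), the flag-transitive $(n_3)$ Levi graphs are the Heawood graph $H(7,\{0,1,3\})$, the Möbius-Kantor graph $GP(8,3)\cong H(8,\{0,1,3\})$, the Desargues graph $GP(10,3)$, the Nauru graph $GP(12,5)$, the graph $GP(24,5)$, and the infinite family $H(n,\{0,1,r+1\})$ with $n\geq 11$ odd and $r^2+r+1\equiv 0\pmod n$.

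For the three named unsplittable configurations I would reduce to Theorem~\ref{thm:infunsp} by recognising each Levi graph as a member of the family $H(n,\{0,1,3\})$. The Fano and Möbius-Kantor cases are presented in this form already. For the Desargues graph the Foster census identifies it as the unique cubic arc-transitive bipartite graph of girth~6 on 20 vertices, and since $H(10,\{0,1,3\})$ is cubic, bipartite, of girth~6 (by Corollary~\ref{cor:g6} applied to $a=1$, $b=3$, $n=10$, which also follows from the $n=10$ row of Table~\ref{tbl:numericsplit}) and arc-transitive, one gets $GP(10,3)\cong H(10,\{0,1,3\})$; Theorem~\ref{thm:infunsp} then supplies unsplittability.

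For the infinite cyclic Haar family I would apply Theorem~\ref{lem:tech} with $a=1$ and $b=r+1$. The congruence $r^2+r+1\equiv 0\pmod n$ with $n\geq 11$ odd forces $r\geq 3$, because the representatives $r=1$ and $r=2$ would require $n\mid 3$ and $n\mid 7$ respectively. For such $r$ one must check that the twelve elements each of $\mathcal{W}$ and $\mathcal{B}$ are pairwise distinct modulo~$n$; each potential collision is a congruence of the form $\alpha+\beta r\equiv 0\pmod n$ with $|\alpha|,|\beta|\leq 3$, which can be combined linearly with $r^2+r+1\equiv 0\pmod n$ to force $n$ to divide a small integer constant incompatible with $n\geq 11$. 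Theorem~\ref{lem:tech} then produces an explicit splitting set. The two remaining sporadic graphs, Nauru $GP(12,5)$ and $GP(24,5)$, are not cyclic Haar graphs (the natural $\mathbb{Z}_n$ action of the generalized Petersen structure swaps the two parts of the bipartition rather than preserving them), so Theorem~\ref{lem:tech} does not apply; I would instead exhibit an explicit splitting set in each graph and verify by direct inspection that it is independent in the square of the Levi graph and that its removal disconnects the graph.

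The main obstacle is the distinctness analysis for the infinite Haar family: although each individual collision is easy to rule out using $r^2+r+1\equiv 0\pmod n$, there are on the order of $\binom{12}{2}$ candidate pairs in each of $\mathcal{W}$ and $\mathcal{B}$, and the bookkeeping is the most tedious ingredient of the proof. A small secondary difficulty is justifying the identification $GP(10,3)\cong H(10,\{0,1,3\})$ cleanly; the Foster-census uniqueness argument works, but an explicit isomorphism (sending $i^+\mapsto u_{3i}$ and $i^-\mapsto v_{3i+1}$ or similar) can be written down if a self-contained verification is preferred.
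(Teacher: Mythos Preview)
Your overall strategy mirrors the paper's proof closely: filter Theorem~11 by girth, verify the three small unsplittable cases, exhibit explicit splitting sets for $GP(12,5)$ and $GP(24,5)$, and apply Theorem~\ref{lem:tech} to the infinite Haar family via the congruence $r^2+r+1\equiv 0\pmod n$. That part is fine and essentially what the paper does.

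However, your treatment of the Desargues configuration contains a genuine error. The claimed isomorphism $GP(10,3)\cong H(10,\{0,1,3\})$ is false: the graph $H(10,\{0,1,3\})$ is \emph{not} arc-transitive (see Table~\ref{default}, row $n=10$, $S=\{0,1,3\}$, column (d)), whereas $GP(10,3)$ is one of the seven symmetric generalised Petersen graphs. Your Foster-census uniqueness argument therefore does not apply, and your appeal to Corollary~\ref{cor:g6} for $n=10$, $a=1$, $b=3$ is also illegitimate, since the hypotheses of Theorem~\ref{lem:tech} fail there (for instance $2b-2a=b+a=4$ in $\mathcal{W}$, so $|\mathcal{W}|<12$). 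In fact the Desargues configuration is not cyclic at all, so it cannot be reached through Theorem~\ref{thm:infunsp}. The paper handles this case, together with the Heawood and Möbius--Kantor graphs, by a direct (computer) verification that no splitting set exists; you will need some such ad hoc argument for $GP(10,3)$ rather than the reduction you propose.
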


\begin{proof}
We start with the classification given in Theorem 11. Only bipartite graphs of girth 6 have to be considered. This rules out the complete graph $K_4$, the complete bipartite
graph $K_{3,3}$, and the generalised Petersen graphs $GP(5, 2)$, $GP (10, 2)$ and $GP(4, 1)$. Note that $GP(4, 1)$ is isomorphic to the cube graph $Q_3$.

It is well known, but one may check by computer that $GP(8, 3) \cong H(8, \{0, 1, 3\})$. See for instance \cite[Table 2]{HMP}.

One may also check by computer that $GP(8, 3)$,  $GP(10, 3)$ and the Heawood graph $H(7, \{0, 1, 3\})$ are unsplittable.

Let $V(GP(n, k)) = \{0, 1, \ldots, n - 1, 0', 1', \ldots, (n-1)'\}$ and $E(GP(n, k)) = \{\{i', ((i+1) \bmod n)'\}, \{i, i'\}, \{i, (i+k) \bmod n\} \mid  i = 0,\ldots, n - 1\}$.
The splitting set for $GP(12, 5)$ is $\Sigma = \{0', 4', 8', 2, 6, 10\}$ as shown in Figure~\ref{fig:splitting_12_5}. 
\begin{figure}[!htbp]
\centering
% \usetikzlibrary{calc}
\begin{tikzpicture}[scale=1.45]
% \pgfmathsetmacro{\a}{360 / 14}
\tikzstyle{white} = [inner sep=1.8, draw, circle, fill=white]
\tikzstyle{red} = [inner sep=1.8, draw, circle, fill=magenta]
\tikzstyle{edge} = [draw, line width=0.6, color=black]
\node[white] (u0) at (90:1.5) {};
\node at (-0.2, 1.7) {$0$}; % Label
\node[white,label=90:$1$] (u1) at (120:1.5) {};
\node[red,label=90:$2$] (u2) at (150:1.5) {};
\node[white,label=90:$3$] (u3) at (180:1.5) {};
\node[white,label=120:$4$] (u4) at (210:1.5) {};
\node[white,label=180:$5$] (u5) at (240:1.5) {};
\node[red,label=-30:$6$] (u6) at (270:1.5) {};
\node[white,label=0:$7$] (u7) at (300:1.5) {};
\node[white,label=20:$8$] (u8) at (330:1.5) {};
\node[white,label=90:$9$] (u9) at (360:1.5) {};
\node[red,label=90:$10$] (u10) at (390:1.5) {};
\node[white] (u11) at (420:1.5) {};
\node at (0.6, 1.6) {$11$}; % Label
\node[red,label=90:$0'$] (v0) at (90:3) {};
\node[white,label=120:$1'$] (v1) at (120:3) {};
\node[white,label=150:$2'$] (v2) at (150:3) {};
\node[white,label=180:$3'$] (v3) at (180:3) {};
\node[red,label=210:$4'$] (v4) at (210:3) {};
\node[white,label=240:$5'$] (v5) at (240:3) {};
\node[white,label=270:$6'$] (v6) at (270:3) {};
\node[white,label=300:$7'$] (v7) at (300:3) {};
\node[red,label=330:$8'$] (v8) at (330:3) {};
\node[white,label=0:$9'$] (v9) at (360:3) {};
\node[white,label=30:$10'$] (v10) at (390:3) {};
\node[white,label=60:$11'$] (v11) at (420:3) {};
%
% \draw[edge] (a0) -- (a1) -- (a2) -- (a3) -- (a4) -- (a5) -- (a6)  -- (a7)  -- (a8)  -- (a9) -- (a10)  -- (a11)  -- (a12)  -- (a13) -- (a0);
% \draw[edge] (a3) -- (a8);
% \draw[edge] (a5) -- (a10);
% \draw[edge] (a7) -- (a12);
% \draw[edge] (a9) -- (a0);
% \draw[edge] (a11) -- (a2);
% \draw[edge] (a13) -- (a4);
% \draw[edge] (a1) -- (a6);
\foreach \i in {0,1,...,11} {
	\pgfmathtruncatemacro{\j}{mod(\i+1,12)}
	\pgfmathtruncatemacro{\k}{mod(\i+5,12)}
	\draw[edge] (v\i) -- (v\j);
	\draw[edge] (v\i) -- (u\i);
	\draw[edge] (u\i) -- (u\k);
}
\end{tikzpicture}
\caption{The splitting set for the Nauru graph $GP(12, 5)$~\cite{Eppstein2007,zhp2012}.}
\label{fig:splitting_12_5}
\end{figure}
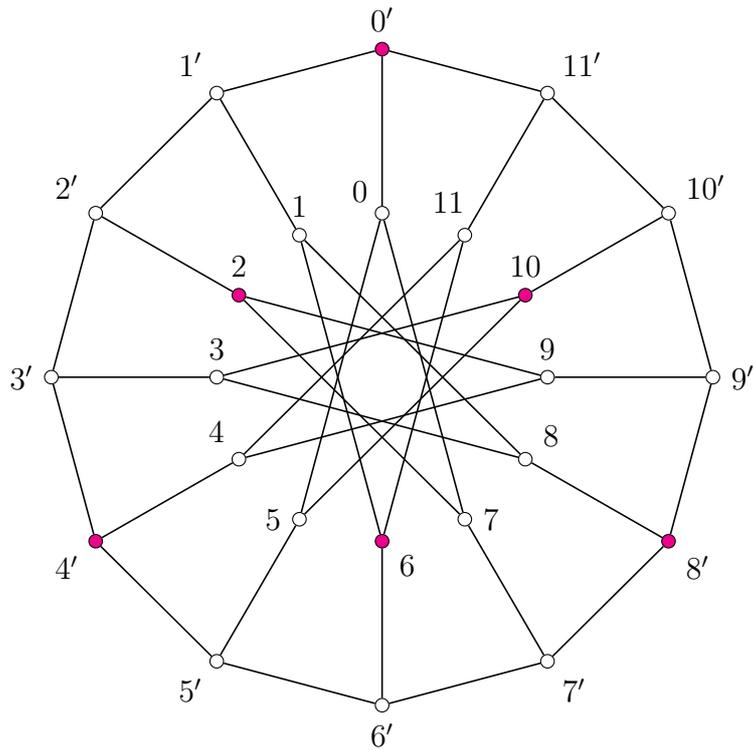
Note that $GP(12, 5) - S \cong 3C_6$, i.e., a disjoint union of three copies of $C_6$.
The splitting set for $GP(24, 5)$ is $\Sigma = \{ 0', 4', 8', 12', 16', 20', 2, 6, 10, 14, 18, 22 \}$ as shown in Figure~\ref{fig:splitting_24_5}.
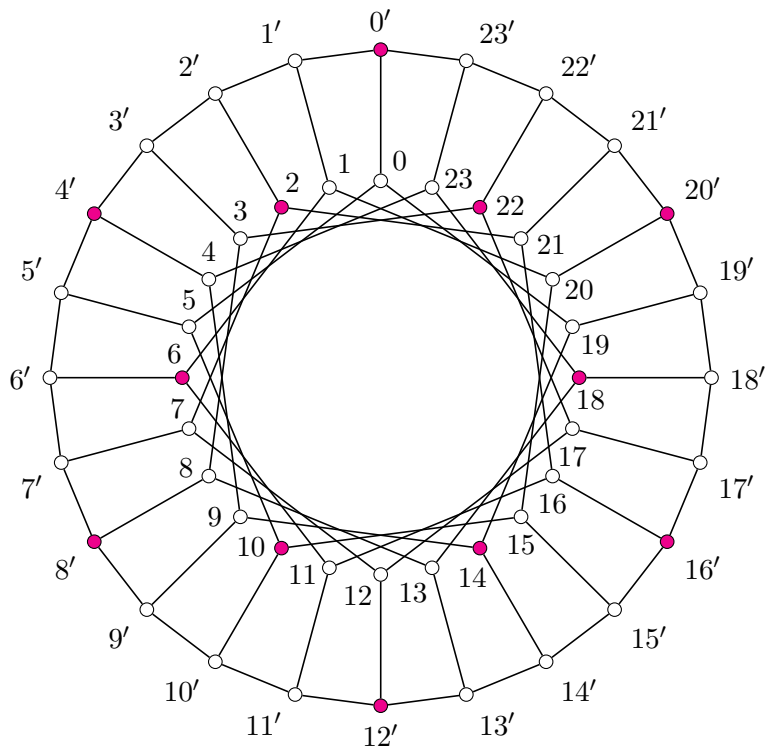
\begin{figure}[!htbp]
\centering
% \usetikzlibrary{calc}
\begin{tikzpicture}[scale=1.45]
% \pgfmathsetmacro{\a}{360 / 14}
\tikzstyle{white} = [inner sep=1.8, draw, circle, fill=white]
\tikzstyle{red} = [inner sep=1.8, draw, circle, fill=magenta]
\tikzstyle{edge} = [draw, line width=0.6, color=black]
\node[white,label={[shift={(0.25,-0.1)}]\small $0$}] (u0) at (90:1.8) {};
\node[white,label={[shift={(0.2,-0.05)}]\small $1$}] (u1) at (105:1.8) {};
\node[red] (u2) at (120:1.8) {};
\node at (-0.8, 1.78) {\small $2$};
\node[white,label=90:{\small $3$}] (u3) at (135:1.8) {};
\node[white,label=90:{\small $4$}] (u4) at (150:1.8) {};
\node[white,label=90:{\small $5$}] (u5) at (165:1.8) {};
\node[red,label={[shift={(-0.1,-0.05)}]\small $6$}] (u6) at (180:1.8) {};
\node[white,label={[shift={(-0.15,-0.1)}]{\small $7$}}] (u7) at (195:1.8) {};
\node[white,label={[shift={(-0.3,-0.25)}]{\small $8$}}] (u8) at (210:1.8) {};
\node[white,label=180:{\small $9$}] (u9) at (225:1.8) {};
\node[red,label={[shift={(-0.4,-0.35)}]\small $10$}] (u10) at (240:1.8) {};
\node[white,label={[shift={(-0.35,-0.45)}]\small $11$}] (u11) at (255:1.8) {};
\node[white,label={[shift={(-0.3,-0.65)}]\small $12$}] (u12) at (270:1.8) {};
\node[white,label={[shift={(-0.25,-0.7)}]\small $13$}] (u13) at (285:1.8) {};
\node[red,label={[shift={(-0.1,-0.75)}]\small $14$}] (u14) at (300:1.8) {};
\node[white,label=-90:{\small $15$}] (u15) at (315:1.8) {};
\node[white,label=-90:{\small $16$}] (u16) at (330:1.8) {};
\node[white,label=-90:{\small $17$}] (u17) at (345:1.8) {};
\node[red,label={[shift={(0.15,-0.65)}]\small $18$}] (u18) at (360:1.8) {};
\node[white,label={[shift={(0.3,-0.6)}]\small $19$}] (u19) at (375:1.8) {};
\node[white,label={[shift={(0.35,-0.5)}]\small $20$}] (u20) at (390:1.8) {};
\node[white,label={[shift={(0.4,-0.45)}]\small $21$}] (u21) at (405:1.8) {};
\node[red,label={[shift={(0.4,-0.35)}]\small $22$}] (u22) at (420:1.8) {};
\node[white,label={[shift={(0.35,-0.25)}]\small $23$}] (u23) at (435:1.8) {};
\node[red,label=90:{\small $0'$}] (v0) at (90:3) {};
\node[white,label=105:{\small $1'$}] (v1) at (105:3) {};
\node[white,label=120:{\small $2'$}] (v2) at (120:3) {};
\node[white,label=135:{\small $3'$}] (v3) at (135:3) {};
\node[red,label=150:{\small $4'$}] (v4) at (150:3) {};
\node[white,label=165:{\small $5'$}] (v5) at (165:3) {};
\node[white,label=180:{\small $6'$}] (v6) at (180:3) {};
\node[white,label=195:{\small $7'$}] (v7) at (195:3) {};
\node[red,label=210:{\small $8'$}] (v8) at (210:3) {};
\node[white,label=225:{\small $9'$}] (v9) at (225:3) {};
\node[white,label=240:{\small $10'$}] (v10) at (240:3) {};
\node[white,label=255:{\small $11'$}] (v11) at (255:3) {};
\node[red,label=270:{\small $12'$}] (v12) at (270:3) {};
\node[white,label=285:{\small $13'$}] (v13) at (285:3) {};
\node[white,label=300:{\small $14'$}] (v14) at (300:3) {};
\node[white,label=315:{\small $15'$}] (v15) at (315:3) {};
\node[red,label=330:{\small $16'$}] (v16) at (330:3) {};
\node[white,label=345:{\small $17'$}] (v17) at (345:3) {};
\node[white,label=360:{\small $18'$}] (v18) at (360:3) {};
\node[white,label=375:{\small $19'$}] (v19) at (375:3) {};
\node[red,label=390:{\small $20'$}] (v20) at (390:3) {};
\node[white,label=405:{\small $21'$}] (v21) at (405:3) {};
\node[white,label=420:{\small $22'$}] (v22) at (420:3) {};
\node[white,label=435:{\small $23'$}] (v23) at (435:3) {};
%
%
% \draw[edge] (a0) -- (a1) -- (a2) -- (a3) -- (a4) -- (a5) -- (a6)  -- (a7)  -- (a8)  -- (a9) -- (a10)  -- (a11)  -- (a12)  -- (a13) -- (a0);
% \draw[edge] (a3) -- (a8);
% \draw[edge] (a5) -- (a10);
% \draw[edge] (a7) -- (a12);
% \draw[edge] (a9) -- (a0);
% \draw[edge] (a11) -- (a2);
% \draw[edge] (a13) -- (a4);
% \draw[edge] (a1) -- (a6);
\foreach \i in {0,1,...,23} {
	\pgfmathtruncatemacro{\j}{mod(\i+1,24)}
	\pgfmathtruncatemacro{\k}{mod(\i+5,24)}
	\draw[edge] (v\i) -- (v\j);
	\draw[edge] (v\i) -- (u\i);
	\draw[edge] (u\i) -- (u\k);
}
\end{tikzpicture}
\caption{The splitting set for $GP(24, 5)$ which was recently named the ADAM graph~\cite{adameditorial}.}
\label{fig:splitting_24_5}
\end{figure}
Note that $GP(24, 5) - S \cong 3C_{12}$. Also, note that $GP(24, 5)$ is not isomorphic to a cyclic Haar graph since its girth is $8$.

Using Theorem~\ref{lem:tech}, one may verify that all graphs in item 5 of Theorem 11 have girth 6 and for each of them the splitting set is $\{ 0^+, 2r^+, (2r+2)^+, r^-, (r+2)^-, (3r+2)^- \}$. We have
\begin{align*}
\mathcal{W} & = \{ 0, 1, r, r+1, r+2, 2r, 2r+1, 2r+2, 2r+3, 3r+1, 3r+2, 3r+3 \}, \\
\mathcal{B} & = \{ 0, 1, n - 1, r - 1, r, r+1, r+2,  2r, 2r+1, 2r+2, 3r+1, 3r+2  \}. 
\end{align*}
It is easy to verify that all elements of $\mathcal{W}$ are distinct and that all elements of $\mathcal{B}$ are distinct. For example,
suppose that $r \equiv 3r+3 \pmod n$. This means that
\begin{equation}
2r \equiv -3 \pmod n.
\label{eq1}
\end{equation}
From condition $r^2 +r + 1 \equiv 0 \pmod n$ we obtain
\begin{equation}
4r^2 + 4r + 4 = (2r)^2 + 2 \cdot 2r + 4 \equiv 0 \pmod n.
\label{eq2}
\end{equation}
Equations \eqref{eq1} and \eqref{eq2} together imply that $(-3)^2 + 2 \cdot (-3) + 4 = 7 \equiv 0 \pmod n$, which is a
contradiction since $n > 11$. All other cases can be checked in a similar way.
\end{proof}

From Theorem~\ref{thm:main} we directly obtain the following corollary.

\begin{corollary}
A cyclic flag-transitive $(n_3)$ configuration is splittable if and only if $n > 8$.
The only two exceptions are:
\begin{enumerate}
\item $H(7, \{0, 1, 3\})$, i.e.\ the Fano plane, and 
\item $H(8, \{0, 1, 3\})$, i.e.\ the Möbius-Kantor configuration.
\end{enumerate}
\end{corollary}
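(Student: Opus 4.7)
The plan is to derive the corollary directly from Theorem~\ref{thm:main} by restricting attention to the cyclic case. By that theorem, the only unsplittable flag-transitive $(n_3)$ configurations are the Fano plane ($n=7$), the Möbius--Kantor configuration ($n=8$), and the Desargues configuration ($n=10$). Hence the corollary reduces to checking that, of these three, exactly the first two are cyclic.

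For the Fano plane and the Möbius--Kantor configuration, this is immediate: their Levi graphs are explicitly $H(7,\{0,1,3\})$ (the Heawood graph) and $H(8,\{0,1,3\})\cong GP(8,3)$, both cyclic Haar graphs of girth 6. By the proposition recalled in Section~\ref{sec:introduction}, both configurations are therefore cyclic, and by Theorem~\ref{thm:main} they are unsplittable; this accounts for the two exceptions listed.

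The key remaining step is to show that the Desargues configuration is \emph{not} cyclic. For this I would invoke the classical fact that the full automorphism group of the Desargues configuration is $S_5$. A cyclic $(10_3)$ configuration requires, by definition, an automorphism of order $v=10$ that acts as a single $10$-cycle on the points. However, the possible orders of elements in $S_5$ are only $1,2,3,4,5,6$ (the least common multiples of the cycle-type partitions of $5$); in particular, $S_5$ contains no element of order $10$. Therefore no such automorphism exists and the Desargues configuration fails to be cyclic.

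Combining these observations yields the corollary: any cyclic flag-transitive $(n_3)$ configuration with $n\ge 9$ is in particular not the Desargues configuration, and is therefore splittable by Theorem~\ref{thm:main}; while for $n=7,8$ the only flag-transitive cyclic $(n_3)$ configurations are the Fano plane and the Möbius--Kantor configuration, which are unsplittable. The main obstacle is precisely the non-cyclicity of Desargues, cleanly settled by the element-order argument in $S_5$ above; once this is in hand, the rest is a direct specialization of Theorem~\ref{thm:main}.
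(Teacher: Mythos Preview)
Your proof is correct and follows the same line as the paper, which simply records the corollary as an immediate consequence of Theorem~\ref{thm:main}. The only substantive point to check is that the Desargues configuration is not cyclic; the paper leaves this implicit (it can be read off Table~\ref{default}, where no arc-transitive cyclic Haar graph occurs for $n=10$), whereas you supply a clean self-contained argument via $\mathrm{Aut}\cong S_5$ having no element of order~$10$, which is a nice addition.
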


\begin{table}[!htbp]
\centering
\caption{List of non-isomorphic connected trivalent cyclic Haar graphs $H(n, S)$ with $n \leq 25$ and some of their properties.}
\label{default}

\vspace{0.5\baselineskip}
\footnotesize
\begin{tabular}{c|c|c|c|c|c}
$n$ & $S$ & (a) & (b) & (c) & (d) \\
\hline
\hline
$3$ & $\{0, 1, 2\}$ & $\bot$ & $4$ & $2$ & $\boldsymbol{\top}$ \\
$4$ & $\{0, 1, 2\}$ & $\bot$ & $4$ & $3$ & $\boldsymbol{\top}$ \\
$5$ & $\{0, 1, 2\}$ & $\bot$ & $4$ & $3$ & $\bot$ \\
$6$ & $\{0, 1, 2\}$ & $\bot$ & $4$ & $4$ & $\bot$ \\
$6$ & $\{0, 1, 3\}$ & $\bot$ & $4$ & $3$ & $\bot$ \\
$7$ & $\{0, 1, 2\}$ & $\bot$ & $4$ & $4$ & $\bot$ \\
$7$ & $\{0, 1, 3\}$ & $\bot$ & $6$ & $3$ & $\boldsymbol{\top}$ \\
$8$ & $\{0, 1, 2\}$ & $\bot$ & $4$ & $5$ & $\bot$ \\
$8$ & $\{0, 1, 3\}$ & $\bot$ & $6$ & $4$ & $\boldsymbol{\top}$ \\
$8$ & $\{0, 1, 4\}$ & $\boldsymbol{\top}$ & $4$ & $4$ & $\bot$ \\
$9$ & $\{0, 1, 2\}$ & $\bot$ & $4$ & $5$ & $\bot$ \\
$9$ & $\{0, 1, 3\}$ & $\bot$ & $6$ & $4$ & $\bot$ \\
$10$ & $\{0, 1, 2\}$ & $\bot$ & $4$ & $6$ & $\bot$ \\
$10$ & $\{0, 1, 3\}$ & $\bot$ & $6$ & $4$ & $\bot$ \\
$10$ & $\{0, 1, 5\}$ & $\boldsymbol{\top}$ & $4$ & $5$ & $\bot$ \\
$11$ & $\{0, 1, 2\}$ & $\bot$ & $4$ & $6$ & $\bot$ \\
$11$ & $\{0, 1, 3\}$ & $\bot$ & $6$ & $5$ & $\bot$ \\
$12$ & $\{0, 1, 2\}$ & $\bot$ & $4$ & $7$ & $\bot$ \\
$12$ & $\{0, 1, 3\}$ & $\bot$ & $6$ & $5$ & $\bot$ \\
$12$ & $\{0, 1, 4\}$ & $\bot$ & $6$ & $5$ & $\bot$ \\
$12$ & $\{0, 1, 5\}$ & $\bot$ & $6$ & $5$ & $\bot$ \\
$12$ & $\{0, 1, 6\}$ & $\boldsymbol{\top}$ & $4$ & $6$ & $\bot$ \\
$13$ & $\{0, 1, 2\}$ & $\bot$ & $4$ & $7$ & $\bot$ \\
$13$ & $\{0, 1, 3\}$ & $\bot$ & $6$ & $5$ & $\bot$ \\
$13$ & $\{0, 1, 4\}$ & $\boldsymbol{\top}$ & $6$ & $5$ & $\boldsymbol{\top}$ \\
$14$ & $\{0, 1, 2\}$ & $\bot$ & $4$ & $8$ & $\bot$ \\
$14$ & $\{0, 1, 3\}$ & $\bot$ & $6$ & $6$ & $\bot$ \\
$14$ & $\{0, 1, 4\}$ & $\boldsymbol{\top}$ & $6$ & $5$ & $\bot$ \\
$14$ & $\{0, 1, 7\}$ & $\boldsymbol{\top}$ & $4$ & $7$ & $\bot$ \\
$15$ & $\{0, 1, 2\}$ & $\bot$ & $4$ & $8$ & $\bot$ \\
$15$ & $\{0, 1, 3\}$ & $\bot$ & $6$ & $6$ & $\bot$ \\
$15$ & $\{0, 1, 4\}$ & $\boldsymbol{\top}$ & $6$ & $5$ & $\bot$ \\
$15$ & $\{0, 1, 5\}$ & $\bot$ & $6$ & $5$ & $\bot$ \\
$15$ & $\{0, 1, 6\}$ & $\bot$ & $6$ & $5$ & $\bot$ \\
$16$ & $\{0, 1, 2\}$ & $\bot$ & $4$ & $9$ & $\bot$ \\
$16$ & $\{0, 1, 3\}$ & $\bot$ & $6$ & $6$ & $\bot$ \\
$16$ & $\{0, 1, 4\}$ & $\boldsymbol{\top}$ & $6$ & $5$ & $\bot$ \\
$16$ & $\{0, 1, 7\}$ & $\boldsymbol{\top}$ & $6$ & $5$ & $\bot$ \\
$16$ & $\{0, 1, 8\}$ & $\boldsymbol{\top}$ & $4$ & $8$ & $\bot$ \\
$17$ & $\{0, 1, 2\}$ & $\bot$ & $4$ & $9$ & $\bot$ \\
$17$ & $\{0, 1, 3\}$ & $\bot$ & $6$ & $7$ & $\bot$ \\
$17$ & $\{0, 1, 4\}$ & $\boldsymbol{\top}$ & $6$ & $5$ & $\bot$ \\
$18$ & $\{0, 1, 2\}$ & $\bot$ & $4$ & $10$ & $\bot$ \\
$18$ & $\{0, 1, 3\}$ & $\bot$ & $6$ & $7$ & $\bot$ \\
$18$ & $\{0, 1, 4\}$ & $\boldsymbol{\top}$ & $6$ & $6$ & $\bot$ \\
$18$ & $\{0, 1, 5\}$ & $\boldsymbol{\top}$ & $6$ & $6$ & $\bot$
\end{tabular}
\qquad
\begin{tabular}{c|c|c|c|c|c}
$n$ & $S$ & (a) & (b) & (c) & (d) \\
\hline
\hline
$18$ & $\{0, 1, 6\}$ & $\bot$ & $6$ & $6$ & $\bot$ \\
$18$ & $\{0, 1, 9\}$ & $\boldsymbol{\top}$ & $4$ & $9$ & $\bot$ \\
$19$ & $\{0, 1, 2\}$ & $\bot$ & $4$ & $10$ & $\bot$ \\
$19$ & $\{0, 1, 3\}$ & $\bot$ & $6$ & $7$ & $\bot$ \\
$19$ & $\{0, 1, 4\}$ & $\boldsymbol{\top}$ & $6$ & $6$ & $\bot$ \\
$19$ & $\{0, 1, 8\}$ & $\boldsymbol{\top}$ & $6$ & $5$ & $\boldsymbol{\top}$ \\
$20$ & $\{0, 1, 2\}$ & $\bot$ & $4$ & $11$ & $\bot$ \\
$20$ & $\{0, 1, 3\}$ & $\bot$ & $6$ & $8$ & $\bot$ \\
$20$ & $\{0, 1, 4\}$ & $\boldsymbol{\top}$ & $6$ & $6$ & $\bot$ \\
$20$ & $\{0, 1, 5\}$ & $\boldsymbol{\top}$ & $6$ & $6$ & $\bot$ \\
$20$ & $\{0, 1, 6\}$ & $\boldsymbol{\top}$ & $6$ & $6$ & $\bot$ \\
$20$ & $\{0, 1, 9\}$ & $\boldsymbol{\top}$ & $6$ & $7$ & $\bot$ \\
$20$ & $\{0, 1, 10\}$ & $\boldsymbol{\top}$ & $4$ & $10$ & $\bot$ \\
$21$ & $\{0, 1, 2\}$ & $\bot$ & $4$ & $11$ & $\bot$ \\
$21$ & $\{0, 1, 3\}$ & $\bot$ & $6$ & $8$ & $\bot$ \\
$21$ & $\{0, 1, 4\}$ & $\boldsymbol{\top}$ & $6$ & $6$ & $\bot$ \\
$21$ & $\{0, 1, 5\}$ & $\boldsymbol{\top}$ & $6$ & $6$ & $\boldsymbol{\top}$ \\
$21$ & $\{0, 1, 7\}$ & $\bot$ & $6$ & $7$ & $\bot$ \\
$21$ & $\{0, 1, 8\}$ & $\bot$ & $6$ & $7$ & $\bot$ \\
$21$ & $\{0, 1, 9\}$ & $\boldsymbol{\top}$ & $6$ & $6$ & $\bot$ \\
$22$ & $\{0, 1, 2\}$ & $\bot$ & $4$ & $12$ & $\bot$ \\
$22$ & $\{0, 1, 3\}$ & $\bot$ & $6$ & $8$ & $\bot$ \\
$22$ & $\{0, 1, 4\}$ & $\boldsymbol{\top}$ & $6$ & $7$ & $\bot$ \\
$22$ & $\{0, 1, 5\}$ & $\boldsymbol{\top}$ & $6$ & $6$ & $\bot$ \\
$22$ & $\{0, 1, 6\}$ & $\boldsymbol{\top}$ & $6$ & $7$ & $\bot$ \\
$22$ & $\{0, 1, 11\}$ & $\boldsymbol{\top}$ & $4$ & $11$ & $\bot$ \\
$23$ & $\{0, 1, 2\}$ & $\bot$ & $4$ & $12$ & $\bot$ \\
$23$ & $\{0, 1, 3\}$ & $\bot$ & $6$ & $9$ & $\bot$ \\
$23$ & $\{0, 1, 4\}$ & $\boldsymbol{\top}$ & $6$ & $7$ & $\bot$ \\
$23$ & $\{0, 1, 5\}$ & $\boldsymbol{\top}$ & $6$ & $7$ & $\bot$ \\
$24$ & $\{0, 1, 2\}$ & $\bot$ & $4$ & $13$ & $\bot$ \\
$24$ & $\{0, 1, 3\}$ & $\bot$ & $6$ & $9$ & $\bot$ \\
$24$ & $\{0, 1, 4\}$ & $\boldsymbol{\top}$ & $6$ & $7$ & $\bot$ \\
$24$ & $\{0, 1, 5\}$ & $\boldsymbol{\top}$ & $6$ & $7$ & $\bot$ \\
$24$ & $\{0, 1, 6\}$ & $\boldsymbol{\top}$ & $6$ & $7$ & $\bot$ \\
$24$ & $\{0, 1, 7\}$ & $\boldsymbol{\top}$ & $6$ & $7$ & $\bot$ \\
$24$ & $\{0, 1, 8\}$ & $\bot$ & $6$ & $8$ & $\bot$ \\
$24$ & $\{0, 1, 9\}$ & $\bot$ & $6$ & $8$ & $\bot$ \\
$24$ & $\{0, 1, 10\}$ & $\boldsymbol{\top}$ & $6$ & $6$ & $\bot$ \\
$24$ & $\{0, 1, 11\}$ & $\boldsymbol{\top}$ & $6$ & $7$ & $\bot$ \\
$24$ & $\{0, 1, 12\}$ & $\boldsymbol{\top}$ & $4$ & $12$ & $\bot$ \\
$25$ & $\{0, 1, 2\}$ & $\bot$ & $4$ & $13$ & $\bot$ \\
$25$ & $\{0, 1, 3\}$ & $\bot$ & $6$ & $9$ & $\bot$ \\
$25$ & $\{0, 1, 4\}$ & $\boldsymbol{\top}$ & $6$ & $7$ & $\bot$ \\
$25$ & $\{0, 1, 5\}$ & $\boldsymbol{\top}$ & $6$ & $7$ & $\bot$ \\
$25$ & $\{0, 1, 10\}$ & $\boldsymbol{\top}$ & $6$ & $7$ & $\bot$ 
\end{tabular}

\vspace{0.5\baselineskip}
\small
(a) splittable?
(b) girth
(c) diameter
(d) arc-transitive?
\end{table}%

\begin{table}[!htbp]
\centering
\caption{List of non-isomorphic connected trivalent cyclic Haar graphs $H(n, S)$ with $26 \leq n \leq 30$ and some of their properties.}
\label{default22}

\vspace{0.5\baselineskip}
\small
\begin{tabular}{c|c|c|c|c|c}
$n$ & $S$ & (a) & (b) & (c) & (d) \\
\hline
\hline
$26$ & $\{0, 1, 2\}$ & $\bot$ & $4$ & $14$ & $\bot$ \\
$26$ & $\{0, 1, 3\}$ & $\bot$ & $6$ & $10$ & $\bot$ \\
$26$ & $\{0, 1, 4\}$ & $\boldsymbol{\top}$ & $6$ & $8$ & $\bot$ \\
$26$ & $\{0, 1, 5\}$ & $\boldsymbol{\top}$ & $6$ & $7$ & $\bot$ \\
$26$ & $\{0, 1, 7\}$ & $\boldsymbol{\top}$ & $6$ & $8$ & $\bot$ \\
$26$ & $\{0, 1, 8\}$ & $\boldsymbol{\top}$ & $6$ & $7$ & $\bot$ \\
$26$ & $\{0, 1, 13\}$ & $\boldsymbol{\top}$ & $4$ & $13$ & $\bot$ \\
$27$ & $\{0, 1, 2\}$ & $\bot$ & $4$ & $14$ & $\bot$ \\
$27$ & $\{0, 1, 3\}$ & $\bot$ & $6$ & $10$ & $\bot$ \\
$27$ & $\{0, 1, 4\}$ & $\boldsymbol{\top}$ & $6$ & $8$ & $\bot$ \\
$27$ & $\{0, 1, 5\}$ & $\boldsymbol{\top}$ & $6$ & $7$ & $\bot$ \\
$27$ & $\{0, 1, 6\}$ & $\boldsymbol{\top}$ & $6$ & $7$ & $\bot$ \\
$27$ & $\{0, 1, 9\}$ & $\bot$ & $6$ & $9$ & $\bot$ \\
$28$ & $\{0, 1, 2\}$ & $\bot$ & $4$ & $15$ & $\bot$ \\
$28$ & $\{0, 1, 3\}$ & $\bot$ & $6$ & $10$ & $\bot$ \\
$28$ & $\{0, 1, 4\}$ & $\boldsymbol{\top}$ & $6$ & $8$ & $\bot$ \\
$28$ & $\{0, 1, 5\}$ & $\boldsymbol{\top}$ & $6$ & $7$ & $\bot$ \\
$28$ & $\{0, 1, 6\}$ & $\boldsymbol{\top}$ & $6$ & $8$ & $\bot$ \\
$28$ & $\{0, 1, 7\}$ & $\boldsymbol{\top}$ & $6$ & $7$ & $\bot$ \\
$28$ & $\{0, 1, 8\}$ & $\boldsymbol{\top}$ & $6$ & $7$ & $\bot$ \\
$28$ & $\{0, 1, 13\}$ & $\boldsymbol{\top}$ & $6$ & $9$ & $\bot$ \\
$28$ & $\{0, 1, 14\}$ & $\boldsymbol{\top}$ & $4$ & $14$ & $\bot$ \\
$29$ & $\{0, 1, 2\}$ & $\bot$ & $4$ & $15$ & $\bot$ \\
$29$ & $\{0, 1, 3\}$ & $\bot$ & $6$ & $11$ & $\bot$ \\
$29$ & $\{0, 1, 4\}$ & $\boldsymbol{\top}$ & $6$ & $8$ & $\bot$ \\
$29$ & $\{0, 1, 5\}$ & $\boldsymbol{\top}$ & $6$ & $7$ & $\bot$ \\
$29$ & $\{0, 1, 9\}$ & $\boldsymbol{\top}$ & $6$ & $7$ & $\bot$ \\
$30$ & $\{0, 1, 2\}$ & $\bot$ & $4$ & $16$ & $\bot$ \\
$30$ & $\{0, 1, 3\}$ & $\bot$ & $6$ & $11$ & $\bot$ \\
$30$ & $\{0, 1, 4\}$ & $\boldsymbol{\top}$ & $6$ & $9$ & $\bot$ \\
$30$ & $\{0, 1, 5\}$ & $\boldsymbol{\top}$ & $6$ & $7$ & $\bot$ \\
$30$ & $\{0, 1, 6\}$ & $\boldsymbol{\top}$ & $6$ & $7$ & $\bot$ \\
$30$ & $\{0, 1, 7\}$ & $\boldsymbol{\top}$ & $6$ & $7$ & $\bot$ \\
$30$ & $\{0, 1, 8\}$ & $\boldsymbol{\top}$ & $6$ & $9$ & $\bot$ \\
$30$ & $\{0, 1, 9\}$ & $\boldsymbol{\top}$ & $6$ & $7$ & $\bot$ \\
$30$ & $\{0, 1, 10\}$ & $\bot$ & $6$ & $10$ & $\bot$ \\
$30$ & $\{0, 1, 11\}$ & $\bot$ & $6$ & $10$ & $\bot$ \\
$30$ & $\{0, 1, 12\}$ & $\boldsymbol{\top}$ & $6$ & $8$ & $\bot$ \\
$30$ & $\{0, 1, 15\}$ & $\boldsymbol{\top}$ & $4$ & $15$ & $\bot$ \\
$30$ & $\{0, 2, 5\}$ & $\boldsymbol{\top}$ & $6$ & $8$ & $\bot$ \\
\end{tabular}

\vspace{0.5\baselineskip}
(a) splittable?
(b) girth
(c) diameter
(d) arc-transitive?
\end{table}%

\section{Splittable geometric $\boldsymbol{(n_k)}$ configurations}
\label{sec:sgc}
We will now show that for any $k$ there exist a geometric, triangle-free, $(n_k)$ configuration which is of type T1, i.e., it is point-splittable
and line-splittable.

Let us first provide a construction to obtain a geometric $(n_k)$ configuration for any $k$. We start with an unbalanced
$(k_1, 1_k)$ configuration, denoted $\mathcal{G}_k^{(1)}$, that consists of a single line containing $k$ points. Let $\mathcal{G}_k^{(i)}$ be
a configuration that is obtained from $\mathcal{G}_k^{(i-1)}$ by the \emph{$k$-fold parallel replication} (see \cite[p.\ 245]{PS}).
The configuration $\mathcal{G}_k^{(k)}$ is a balanced $(k^k{}_k)$ configuration, called a \emph{generalised Gray configuration}.

\begin{lemma}
\label{lem:constr}
Let $\mathcal{C}$ be an arbitrary geometric $(n_k)$ configuration. There exists a geometric $(kn_k)$ configuration
$\mathcal{D}$ that is point- and line-splittable. Moreover, if $\mathcal{C}$ is triangle-free then $\mathcal{D}$ is also triangle-free.
\end{lemma}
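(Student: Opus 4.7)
The plan is to take $\mathcal{D}$ to be a disjoint union of $k$ generic parallel translates of $\mathcal{C}$. Concretely, I would fix a direction $\vec{v}$ not parallel to any line of $\mathcal{C}$, pick a step size $\varepsilon > 0$, and set
\[
\mathcal{D} := \bigcup_{j=0}^{k-1} (\mathcal{C} + j\varepsilon\vec{v}).
\]
For each pair consisting of a point $p$ and a non-incident line $\ell$ of $\mathcal{C}$, only finitely many values of $\varepsilon$ would place some translate of $p$ onto some translate of $\ell$, so a generic $\varepsilon$ rules out every such accidental cross-copy incidence. The result has $kn$ distinct points and $kn$ distinct lines, with every point on exactly $k$ lines (those of its own copy) and every line through exactly $k$ points, which makes $\mathcal{D}$ a geometric $(kn_k)$ configuration.

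For splittability I would exploit the fact that, by the generic placement, no line of one copy passes through a point of another copy. Then $L(\mathcal{D})$ is a disjoint union of $k$ copies of $L(\mathcal{C})$ and is therefore disconnected. Hence $\Sigma = \emptyset$ is (vacuously) an independent set in the Gr\"unbaum graph, consists (vacuously) of points only as well as of lines only, and $L(\mathcal{D}) - \Sigma = L(\mathcal{D})$ is disconnected. This shows that $\mathcal{D}$ is both point-splittable and line-splittable.

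For the triangle-free clause I would note that every point of $\mathcal{D}$ is incident only with lines of its own copy, so any configuration triangle in $\mathcal{D}$ lies entirely within one copy and thus comes from a triangle of $\mathcal{C}$; inheritance of triangle-freeness follows at once. The only real difficulty here is the geometric one of ruling out finitely many bad translation parameters to guarantee distinct points, distinct lines, and no spurious inter-copy incidences, and this is a standard general-position argument.
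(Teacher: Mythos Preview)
Your construction is formally a geometric $(kn_k)$ configuration, and the general-position argument for avoiding accidental incidences is fine. The problem is that your $\mathcal{D}$ is \emph{disconnected}: its Levi graph is the disjoint union of $k$ copies of $L(\mathcal{C})$. Splittability in this paper is a notion for connected objects --- the graph version of the definition explicitly begins ``A \emph{connected} graph $G$ is splittable if \ldots'', and the surrounding results (e.g.\ Theorem~\ref{thm:unsplit3Conn} on $3$-connectedness, the whole discussion of splitting types) presuppose a connected Levi graph. Declaring $\Sigma=\emptyset$ a splitting set for an already disconnected Levi graph is a degenerate reading that the paper does not intend; under it, Lemma~\ref{lem:constr} and Theorem~15 would be content-free, since any disjoint union of copies of any configuration would do. So this is a genuine gap: you have not produced a connected $(kn_k)$ configuration that is point- and line-splittable.

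The paper's construction fixes exactly this. It removes one line $L$ of $\mathcal{C}$ through points $p^{(1)},\dots,p^{(k)}$, takes $k$ translated copies of $\mathcal{C}\setminus\{L\}$ in a direction not parallel to any line of $\mathcal{C}$, and then \emph{reconnects} the copies by $k$ new parallel lines $M_1,\dots,M_k$, where $M_j$ passes through the $j$-th distinguished point in each copy. The resulting $\mathcal{D}$ is connected; $\{M_1,\dots,M_k\}$ is a line-splitting set (the lines $M_j$ are pairwise non-concurrent, and deleting them separates the copies), while for any fixed $i$ the set $\{p_i^{(1)},\dots,p_i^{(k)}\}$ is a point-splitting set (these points are pairwise non-collinear after $L$ is removed, and deleting them isolates the $i$-th copy). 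Your translate idea is the right starting picture; what is missing is the deletion of one line and the addition of the transversal lines $M_j$ that make $\mathcal{D}$ connected while still leaving obvious splitting sets.
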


\begin{proof}
Let $\mathcal{C}$ be as stated. Select an arbitrary line $L$ of $\mathcal{C}$ passing through points
$p^{(1)}, p^{(2)}, \ldots, p^{(k)}$ of $\mathcal{C}$ as shown in Figure~\ref{fig:constr}~(a).
Remove line $L$ and call the resulting structure $\mathcal{C}'$. Make $k$ copies of $\mathcal{C}'$: $\mathcal{C}'_1, \mathcal{C}'_2, \ldots, \mathcal{C}'_k$ and
place them equally spaced in any direction $\vec{v}$ that is non-parallel to the direction of any line of $\mathcal{C}'$ (see Figure~\ref{fig:constr}~(b)).
\begin{figure}[!htbp]
\centering
\subfigure[]{
\usetikzlibrary{calc}
\begin{tikzpicture}[scale=0.8]
\definecolor{pomaranca}{rgb}{.80,0.0,.80}
\tikzstyle{every node}=[draw,circle,fill,inner sep=1.2pt]
\tikzstyle{lipsa}=[color=dgc,line width=1pt,fill=lgc]
\definecolor{dgc}{rgb}{0.0,0.300,0.0}
\definecolor{lgc}{rgb}{0.950,1.0,0.950}
\draw[rotate around={-30:(5.2,5)},lipsa] (5.2,5) ellipse (35pt and 60pt);
\begin{footnotesize}
\coordinate (l1) at ($ (5, 5) + (60:2.7) $);
\coordinate (l2) at ($ (5, 5) - (60:2.5) $);
\draw[color=pomaranca,line width=1.2pt] (l1) -- (l2);
\node[label=0:$p^{(2)}$] (p2)  at  ($ (5, 5) + (60:0.4) $) {};
\node[label=0:$p^{(1)}$] (p1)  at  ($ (5, 5) + (60:1.2) $) {};
\node[label=0:$p^{(k)}$] (pk)  at  ($ (5, 5) - (60:1.2) $) {};
\node[draw=none,fill=none] (plbl)  at  ($ (5.3, 5) - (60:0.4) $) {$\ldots$};
\node[draw=none,fill=none,color=pomaranca] (plbl)  at  ($ (5.3, 5) + (60:2.5) $) {$L$};
\node[draw=none,fill=none] (plbl)  at  ($ (6.7, 5.2) - (60:2.5) $) {$\mathcal{C}$};
% \draw[rotate around={-30:(10,5)},lipsa] (10,5) ellipse (25pt and 50pt);
\end{footnotesize}
\end{tikzpicture}
}
\subfigure[]{
\usetikzlibrary{calc}
\begin{tikzpicture}[scale=0.8]
\definecolor{pomaranca}{rgb}{.80,0.0,.80}
\tikzstyle{every node}=[draw,circle,fill,inner sep=1.2pt]
\tikzstyle{lipsa}=[color=dgc,line width=1pt,fill=lgc]
\definecolor{dgc}{rgb}{0.0,0.300,0.0}
\definecolor{lgc}{rgb}{0.950,1.0,0.950}
\draw[rotate around={-30:(5.2,5)},lipsa] (5.2,5) ellipse (35pt and 60pt);
\draw[rotate around={-30:(8.7,5)},lipsa] (8.7,5) ellipse (35pt and 60pt);
\draw[rotate around={-30:(13.7,5)},lipsa] (13.7,5) ellipse (35pt and 60pt);
\begin{footnotesize}
% \coordinate (l1) at ($ (5, 5) + (60:2.7) $);
% \coordinate (l2) at ($ (5, 5) - (60:2.5) $);
\coordinate (m1l)  at  ($ (5, 5) + (60:1.2) - (1.5, 0)$);
\coordinate (m1d)  at  ($ (13.5, 5) + (60:1.2) + (1.4, 0)$);
\draw[color=pomaranca,line width=1.2pt] (m1l) -- (m1d);
\coordinate (m2l)  at  ($ (5, 5) + (60:0.4) - (1.5, 0)$);
\coordinate (m2d)  at  ($ (13.5, 5) + (60:0.4) + (1.7, 0)$);
\draw[color=pomaranca,line width=1.2pt] (m2l) -- (m2d);
\coordinate (mkl)  at  ($ (5, 5) - (60:1.2) - (1.0, 0)$);
\coordinate (mkd)  at  ($ (13.5, 5) - (60:1.2) + (1.9, 0)$);
\draw[color=pomaranca,line width=1.2pt] (mkl) -- (mkd);
\node[label=-30:$p^{(2)}_1$] (p2)  at  ($ (5, 5) + (60:0.4) $) {};
\node[label={[yshift=1.5]-30:$p^{(1)}_1$}] (p1)  at  ($ (5, 5) + (60:1.2) $) {};
\node[label=-30:$p^{(k)}_1$] (pk)  at  ($ (5, 5) - (60:1.2) $) {};
\node[label=-30:$p^{(2)}_2$] (p2a)  at  ($ (8.5, 5) + (60:0.4) $) {};
\node[label={[yshift=1.5]-30:$p^{(1)}_2$}] (p1a)  at  ($ (8.5, 5) + (60:1.2) $) {};
\node[label=-30:$p^{(k)}_2$] (pka)  at  ($ (8.5, 5) - (60:1.2) $) {};
\node[label=-30:$p^{(2)}_k$] (p2b)  at  ($ (13.5, 5) + (60:0.4) $) {};
\node[label={[yshift=1.5]-30:$p^{(1)}_k$}] (p1b)  at  ($ (13.5, 5) + (60:1.2) $) {};
\node[label=-30:$p^{(k)}_k$] (pkb)  at  ($ (13.5, 5) - (60:1.2) $) {};
\node[draw=none,fill=none] (plbl)  at  ($ (5.1, 5) - (60:0.4) $) {$\ldots$};
\node[draw=none,fill=none] (plbl)  at  ($ (8.6, 5) - (60:0.4) $) {$\ldots$};
\node[draw=none,fill=none] (plbl)  at  ($ (13.6, 5) - (60:0.4) $) {$\ldots$};
\node[draw=none,fill=none] (plbl)  at  (11.1, 4.5) {$\cdots$};
\node[draw=none,fill=none,color=pomaranca] (plbl)  at  ($ (m1d) + (0.4,0) $) {$M_1$};
\node[draw=none,fill=none,color=pomaranca] (plbl)  at  ($ (m2d) + (0.4,0) $) {$M_2$};
\node[draw=none,fill=none,color=pomaranca] (plbl)  at  ($ (mkd) + (0.4,0) $) {$M_k$};
\node[draw=none,fill=none] (plbl)  at  ($ (6.7, 5.2) - (60:2.5) $) {$\mathcal{C}_1'$};
\node[draw=none,fill=none] (plbl)  at  ($ (10.2, 5.2) - (60:2.5) $) {$\mathcal{C}_2'$};
\node[draw=none,fill=none] (plbl)  at  ($ (15.2, 5.2) - (60:2.5) $) {$\mathcal{C}_k'$};
% \draw[rotate around={-30:(10,5)},lipsa] (10,5) ellipse (25pt and 50pt);
\end{footnotesize}
\end{tikzpicture}
}
\caption{Construction provided by Lemma~\ref{lem:constr}.}
\label{fig:constr}
\end{figure}
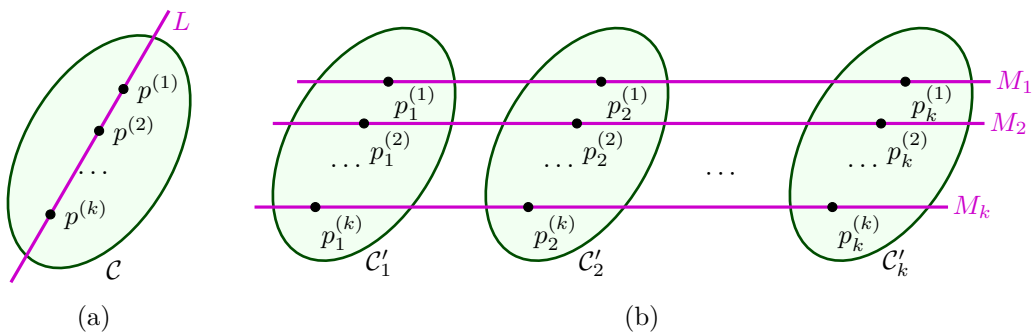
Point of $\mathcal{C}'_i$ that correspond to $p^{(j)}$ in $\mathcal{C}'$ is denoted $p_i^{(j)}$. Now add lines $M_1, M_2, \ldots, M_k$,
such that $M_i$ passes through points $p_1^{(i)}, p_2^{(i)}, \ldots, p_k^{(i)}$. The resulting structure, denoted $\mathcal{D}$,
is clearly a $(kn_k)$ configuration.

The set of lines $\{M_1, M_2, \ldots, M_k\}$ is a splitting set of $\mathcal{D}$ which proves that $\mathcal{D}$ is line-splittable. The set of points $\{p_i^{(1)}, p_i^{(2)}, \ldots, p_i^{(k)}\}$ is a splitting set for an arbitrary $1 \leq i \leq k$ which proves that $\mathcal{D}$ is also point-splittable.

It is easy to see that the resulting structure $\mathcal{D}$ is triangle-free.
\end{proof}

Now we can state the main result of this section.

\begin{theorem}
For any $k >1$ and any $n_0$ there exist a number $n > n_0$, such that there exists a splittable $(n_k)$ configuration.
\end{theorem}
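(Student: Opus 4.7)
The plan is to combine Lemma~\ref{lem:constr} with the existence of the generalised Gray configuration $\mathcal{G}_k^{(k)}$ introduced at the start of Section~\ref{sec:sgc}. Recall that $\mathcal{G}_k^{(k)}$ is a geometric $(k^k{}_k)$ configuration. So I already have, for each fixed $k > 1$, at least one geometric $(n_k)$ configuration to feed into the construction of Lemma~\ref{lem:constr}.

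First I would invoke Lemma~\ref{lem:constr} with $\mathcal{C} = \mathcal{G}_k^{(k)}$. This immediately produces a splittable geometric $(k \cdot k^k{}_k) = (k^{k+1}{}_k)$ configuration $\mathcal{D}_1$. The key point is that Lemma~\ref{lem:constr} is self-feeding: its output is again a geometric $(n_k)$ configuration for a larger value of $n$, so the lemma can be iterated. Applying it to $\mathcal{D}_1$ gives a splittable geometric $(k^{k+2}{}_k)$ configuration $\mathcal{D}_2$, and more generally, $j$ applications of the construction yield a splittable geometric $(k^{k+j}{}_k)$ configuration $\mathcal{D}_j$.

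Given any threshold $n_0$, choose $j$ large enough that $n := k^{k+j} > n_0$; this is possible since $k \ge 2$. Then $\mathcal{D}_j$ is the required splittable $(n_k)$ configuration with $n > n_0$, completing the proof.

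I do not foresee a substantive obstacle: the only things to check are that (i) each intermediate configuration is still geometric (immediate from the statement of Lemma~\ref{lem:constr}, which produces its output geometrically in the Euclidean plane) and (ii) the iteration is well-defined (also immediate, since the hypothesis of Lemma~\ref{lem:constr} is only that we have a geometric $(n_k)$ configuration, with no further condition). One might remark as a bonus that every $\mathcal{D}_j$ constructed this way is of splitting type T1, since Lemma~\ref{lem:constr} in fact establishes both point- and line-splittability.
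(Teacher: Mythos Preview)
Your argument is correct and essentially identical to the paper's own proof: both start from the generalised Gray configuration $\mathcal{G}_k^{(k)}$, iterate Lemma~\ref{lem:constr}, and observe that the number of points grows without bound (you are just slightly more explicit in tracking it as $k^{k+j}$). Your closing remark about type T1 also matches the paper's observation immediately following the proof.
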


\begin{proof}
Let $\mathcal{C}_0 = \mathcal{G}_k^{(k)}$, i.e.\ the generalised Gray $(k^k{}_k)$ configuration. Let $\mathcal{C}_i$ be a configuration obtained from $\mathcal{C}_{i-1}$ by an application of Lemma~14. Note that the obtained configuration $\mathcal{C}_i$ is not
uniquely defined -- it depends on the choice of the line~$L$.

From Lemma~14 it follows that each $\mathcal{C}_i$, $i \geq 1$, is a point- and
line-splittable configuration. Each configuration $\mathcal{C}_i$ is balanced and the
number of points of $\mathcal{C}_{i+1}$ is strictly greater than the number of points of $\mathcal{C}_i$. Therefore, for increasing
values of $i$, the number of points will eventually exceed any given number~$n_0$.
\end{proof}

Since configurations $\mathcal{C}_1, \mathcal{C}_2, \ldots$ constructed in the proof of Theorem 15 are all of type T1, their
duals are also of type T1.

\begin{example}
The generalised Gray $(k^k{}_k)$ configuration for $k = 3$ is simply called the \emph{Gray configuration} (see Figure~\ref{fig:gray}(a)). 
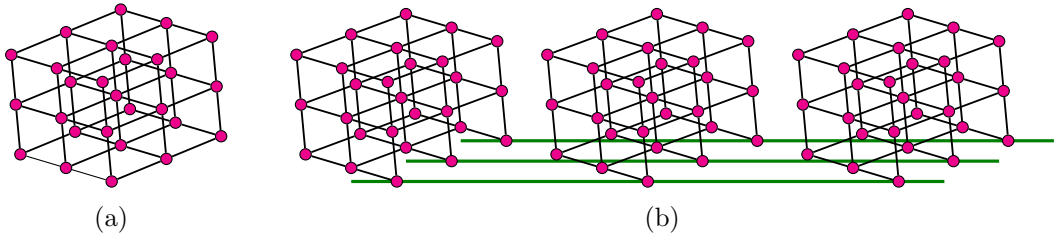
\begin{figure}[!htbp]
\centering
\subfigure[]{
\usetikzlibrary{calc}
\begin{tikzpicture}[scale=0.6]
\tikzstyle{every node}=[draw,circle,inner sep=1.5pt,fill=magenta]
\tikzstyle{edge}=[line width=0.7]
\coordinate (v1) at (1, -0.3);
\coordinate (v2) at (1.2, 0.5);
\coordinate (v3) at (-0.1, 1.1);
\node (a111) at (0, 0) {};
\node (a211) at ($ (0, 0) + (v1) $) {};
\node (a311) at ($ (0, 0) + 2*(v1) $) {};
\node (a121) at ($ (0, 0) + (v2) $) {};
\node (a221) at ($ (0, 0) + (v1) + (v2) $) {};
\node (a321) at ($ (0, 0) + 2*(v1) + (v2) $) {};
\node (a131) at ($ (0, 0) + 2*(v2) $) {};
\node (a231) at ($ (0, 0) + (v1) + 2*(v2) $) {};
\node (a331) at ($ (0, 0) + 2*(v1) + 2*(v2) $) {};
\draw (a111) -- (a211) -- (a311);
\draw [edge] (a121) -- (a221) -- (a321);
\draw [edge] (a131) -- (a231) -- (a331);
\draw [edge] (a111) -- (a121) -- (a131);
\draw [edge] (a211) -- (a221) -- (a231);
\draw [edge] (a311) -- (a321) -- (a331);
\node (a112) at ($ (0, 0) + (v3) $) {};
\node (a212) at ($ (0, 0) + (v1) + (v3) $) {};
\node (a312) at ($ (0, 0) + 2*(v1) + (v3) $) {};
\node (a122) at ($ (0, 0) + (v2) + (v3) $) {};
\node (a222) at ($ (0, 0) + (v1) + (v2) + (v3) $) {};
\node (a322) at ($ (0, 0) + 2*(v1) + (v2) + (v3) $) {};
\node (a132) at ($ (0, 0) + 2*(v2) + (v3) $) {};
\node (a232) at ($ (0, 0) + (v1) + 2*(v2) + (v3) $) {};
\node (a332) at ($ (0, 0) + 2*(v1) + 2*(v2) + (v3) $) {};
\draw [edge] (a112) -- (a212) -- (a312);
\draw [edge] (a122) -- (a222) -- (a322);
\draw [edge] (a132) -- (a232) -- (a332);
\draw [edge] (a112) -- (a122) -- (a132);
\draw [edge] (a212) -- (a222) -- (a232);
\draw [edge] (a312) -- (a322) -- (a332);
\node (a113) at ($ (0, 0) + 2*(v3) $) {};
\node (a213) at ($ (0, 0) + (v1) + 2*(v3) $) {};
\node (a313) at ($ (0, 0) + 2*(v1) + 2*(v3) $) {};
\node (a123) at ($ (0, 0) + (v2) + 2*(v3) $) {};
\node (a223) at ($ (0, 0) + (v1) + (v2) + 2*(v3) $) {};
\node (a323) at ($ (0, 0) + 2*(v1) + (v2) + 2*(v3) $) {};
\node (a133) at ($ (0, 0) + 2*(v2) + 2*(v3) $) {};
\node (a233) at ($ (0, 0) + (v1) + 2*(v2) + 2*(v3) $) {};
\node (a333) at ($ (0, 0) + 2*(v1) + 2*(v2) + 2*(v3) $) {};
\draw [edge] (a113) -- (a213) -- (a313);
\draw [edge] (a123) -- (a223) -- (a323);
\draw [edge] (a133) -- (a233) -- (a333);
\draw [edge] (a113) -- (a123) -- (a133);
\draw [edge] (a213) -- (a223) -- (a233);
\draw [edge] (a313) -- (a323) -- (a333);
\draw [edge] (a111) -- (a112) -- (a113);
\draw [edge] (a211) -- (a212) -- (a213);
\draw [edge] (a311) -- (a312) -- (a313);
\draw [edge] (a121) -- (a122) -- (a123);
\draw [edge] (a221) -- (a222) -- (a223);
\draw [edge] (a321) -- (a322) -- (a323);
\draw [edge] (a131) -- (a132) -- (a133);
\draw [edge] (a231) -- (a232) -- (a233);
\draw [edge] (a331) -- (a332) -- (a333);
\end{tikzpicture}
}
\subfigure[]{
\usetikzlibrary{calc}
\definecolor{dgrin}{rgb}{0.0,0.5,0.0}
\begin{tikzpicture}[scale=0.6]
\tikzstyle{every node}=[draw,circle,inner sep=1.5pt,fill=magenta]
\tikzstyle{edge}=[line width=0.7]
\tikzstyle{special edge}=[line width=1.2,color=dgrin]
\coordinate (v1) at (1, -0.3);
\coordinate (v2) at (1.2, 0.45); % \coordinate (v2) at (1.2, 0.5);
\coordinate (v3) at (-0.1, 1.1);
\coordinate (k3) at (11, 0);
\coordinate (ka311) at ($ (0, 0) + 2*(v1) $);
\coordinate (kc311) at ($ (k3) + 2*(v1) $);
\coordinate (ka321) at ($ (0, 0) + 2*(v1) + (v2) $);
\coordinate (kc321) at ($ (k3) + 2*(v1) + (v2) $);
\coordinate (ka331) at ($ (0, 0) + 2*(v1) + 2*(v2) $);
\coordinate (kc331) at ($ (k3) + 2*(v1) + 2*(v2) $);
\draw [special edge] ($ (ka311) - (1,0) $) -- ($ (kc311) + (1,0) $);
\draw [special edge] ($ (ka321) - (1,0) $) -- ($ (kc321) + (1,0) $);
\draw [special edge] ($ (ka331) - (1,0) $) -- ($ (kc331) + (1,0) $);
\node (a311) at ($ (0, 0) + 2*(v1) $) {};
\node (c311) at ($ (k3) + 2*(v1) $) {};
\node (a321) at ($ (0, 0) + 2*(v1) + (v2) $) {};
\node (c321) at ($ (k3) + 2*(v1) + (v2) $) {};
\node (a331) at ($ (0, 0) + 2*(v1) + 2*(v2) $) {};
\node (c331) at ($ (k3) + 2*(v1) + 2*(v2) $) {};
\node (a111) at (0, 0) {};
\node (a211) at ($ (0, 0) + (v1) $) {};
\node (a121) at ($ (0, 0) + (v2) $) {};
\node (a221) at ($ (0, 0) + (v1) + (v2) $) {};
\node (a131) at ($ (0, 0) + 2*(v2) $) {};
\node (a231) at ($ (0, 0) + (v1) + 2*(v2) $) {};
\node (a112) at ($ (0, 0) + (v3) $) {};
\node (a212) at ($ (0, 0) + (v1) + (v3) $) {};
\node (a312) at ($ (0, 0) + 2*(v1) + (v3) $) {};
\node (a122) at ($ (0, 0) + (v2) + (v3) $) {};
\node (a222) at ($ (0, 0) + (v1) + (v2) + (v3) $) {};
\node (a322) at ($ (0, 0) + 2*(v1) + (v2) + (v3) $) {};
\node (a132) at ($ (0, 0) + 2*(v2) + (v3) $) {};
\node (a232) at ($ (0, 0) + (v1) + 2*(v2) + (v3) $) {};
\node (a332) at ($ (0, 0) + 2*(v1) + 2*(v2) + (v3) $) {};
\node (a113) at ($ (0, 0) + 2*(v3) $) {};
\node (a213) at ($ (0, 0) + (v1) + 2*(v3) $) {};
\node (a313) at ($ (0, 0) + 2*(v1) + 2*(v3) $) {};
\node (a123) at ($ (0, 0) + (v2) + 2*(v3) $) {};
\node (a223) at ($ (0, 0) + (v1) + (v2) + 2*(v3) $) {};
\node (a323) at ($ (0, 0) + 2*(v1) + (v2) + 2*(v3) $) {};
\node (a133) at ($ (0, 0) + 2*(v2) + 2*(v3) $) {};
\node (a233) at ($ (0, 0) + (v1) + 2*(v2) + 2*(v3) $) {};
\node (a333) at ($ (0, 0) + 2*(v1) + 2*(v2) + 2*(v3) $) {};
\coordinate (k2) at (5.5, 0);
\node (b111) at (k2) {};
\node (b211) at ($ (k2) + (v1) $) {};
\node (b311) at ($ (k2) + 2*(v1) $) {};
\node (b121) at ($ (k2) + (v2) $) {};
\node (b221) at ($ (k2) + (v1) + (v2) $) {};
\node (b321) at ($ (k2) + 2*(v1) + (v2) $) {};
\node (b131) at ($ (k2) + 2*(v2) $) {};
\node (b231) at ($ (k2) + (v1) + 2*(v2) $) {};
\node (b331) at ($ (k2) + 2*(v1) + 2*(v2) $) {};
\node (b112) at ($ (k2) + (v3) $) {};
\node (b212) at ($ (k2) + (v1) + (v3) $) {};
\node (b312) at ($ (k2) + 2*(v1) + (v3) $) {};
\node (b122) at ($ (k2) + (v2) + (v3) $) {};
\node (b222) at ($ (k2) + (v1) + (v2) + (v3) $) {};
\node (b322) at ($ (k2) + 2*(v1) + (v2) + (v3) $) {};
\node (b132) at ($ (k2) + 2*(v2) + (v3) $) {};
\node (b232) at ($ (k2) + (v1) + 2*(v2) + (v3) $) {};
\node (b332) at ($ (k2) + 2*(v1) + 2*(v2) + (v3) $) {};
\node (b113) at ($ (k2) + 2*(v3) $) {};
\node (b213) at ($ (k2) + (v1) + 2*(v3) $) {};
\node (b313) at ($ (k2) + 2*(v1) + 2*(v3) $) {};
\node (b123) at ($ (k2) + (v2) + 2*(v3) $) {};
\node (b223) at ($ (k2) + (v1) + (v2) + 2*(v3) $) {};
\node (b323) at ($ (k2) + 2*(v1) + (v2) + 2*(v3) $) {};
\node (b133) at ($ (k2) + 2*(v2) + 2*(v3) $) {};
\node (b233) at ($ (k2) + (v1) + 2*(v2) + 2*(v3) $) {};
\node (b333) at ($ (k2) + 2*(v1) + 2*(v2) + 2*(v3) $) {};
\node (c111) at (k3) {};
\node (c211) at ($ (k3) + (v1) $) {};
\node (c121) at ($ (k3) + (v2) $) {};
\node (c221) at ($ (k3) + (v1) + (v2) $) {};
\node (c131) at ($ (k3) + 2*(v2) $) {};
\node (c231) at ($ (k3) + (v1) + 2*(v2) $) {};
\node (c112) at ($ (k3) + (v3) $) {};
\node (c212) at ($ (k3) + (v1) + (v3) $) {};
\node (c312) at ($ (k3) + 2*(v1) + (v3) $) {};
\node (c122) at ($ (k3) + (v2) + (v3) $) {};
\node (c222) at ($ (k3) + (v1) + (v2) + (v3) $) {};
\node (c322) at ($ (k3) + 2*(v1) + (v2) + (v3) $) {};
\node (c132) at ($ (k3) + 2*(v2) + (v3) $) {};
\node (c232) at ($ (k3) + (v1) + 2*(v2) + (v3) $) {};
\node (c332) at ($ (k3) + 2*(v1) + 2*(v2) + (v3) $) {};
\node (c113) at ($ (k3) + 2*(v3) $) {};
\node (c213) at ($ (k3) + (v1) + 2*(v3) $) {};
\node (c313) at ($ (k3) + 2*(v1) + 2*(v3) $) {};
\node (c123) at ($ (k3) + (v2) + 2*(v3) $) {};
\node (c223) at ($ (k3) + (v1) + (v2) + 2*(v3) $) {};
\node (c323) at ($ (k3) + 2*(v1) + (v2) + 2*(v3) $) {};
\node (c133) at ($ (k3) + 2*(v2) + 2*(v3) $) {};
\node (c233) at ($ (k3) + (v1) + 2*(v2) + 2*(v3) $) {};
\node (c333) at ($ (k3) + 2*(v1) + 2*(v2) + 2*(v3) $) {};
\draw [edge] (c113) -- (c213) -- (c313);
\draw [edge] (c123) -- (c223) -- (c323);
\draw [edge] (c133) -- (c233) -- (c333);
\draw [edge] (c113) -- (c123) -- (c133);
\draw [edge] (c213) -- (c223) -- (c233);
\draw [edge] (c313) -- (c323) -- (c333);
\draw [edge] (c111) -- (c112) -- (c113);
\draw [edge] (c211) -- (c212) -- (c213);
\draw [edge] (c311) -- (c312) -- (c313);
\draw [edge] (c121) -- (c122) -- (c123);
\draw [edge] (c221) -- (c222) -- (c223);
\draw [edge] (c321) -- (c322) -- (c323);
\draw [edge] (c131) -- (c132) -- (c133);
\draw [edge] (c231) -- (c232) -- (c233);
\draw [edge] (c331) -- (c332) -- (c333);
\draw [edge] (c112) -- (c212) -- (c312);
\draw [edge] (c122) -- (c222) -- (c322);
\draw [edge] (c132) -- (c232) -- (c332);
\draw [edge] (c112) -- (c122) -- (c132);
\draw [edge] (c212) -- (c222) -- (c232);
\draw [edge] (c312) -- (c322) -- (c332);
\draw [edge] (c111) -- (c211) -- (c311);
\draw [edge] (c121) -- (c221) -- (c321);
\draw [edge] (c131) -- (c231) -- (c331);
\draw [edge] (c111) -- (c121) -- (c131);
\draw [edge] (c211) -- (c221) -- (c231);
\draw [edge] (b113) -- (b213) -- (b313);
\draw [edge] (b123) -- (b223) -- (b323);
\draw [edge] (b133) -- (b233) -- (b333);
\draw [edge] (b113) -- (b123) -- (b133);
\draw [edge] (b213) -- (b223) -- (b233);
\draw [edge] (b313) -- (b323) -- (b333);
\draw [edge] (b111) -- (b112) -- (b113);
\draw [edge] (b211) -- (b212) -- (b213);
\draw [edge] (b311) -- (b312) -- (b313);
\draw [edge] (b121) -- (b122) -- (b123);
\draw [edge] (b221) -- (b222) -- (b223);
\draw [edge] (b321) -- (b322) -- (b323);
\draw [edge] (b131) -- (b132) -- (b133);
\draw [edge] (b231) -- (b232) -- (b233);
\draw [edge] (b331) -- (b332) -- (b333);
\draw [edge] (b112) -- (b212) -- (b312);
\draw [edge] (b122) -- (b222) -- (b322);
\draw [edge] (b132) -- (b232) -- (b332);
\draw [edge] (b112) -- (b122) -- (b132);
\draw [edge] (b212) -- (b222) -- (b232);
\draw [edge] (b312) -- (b322) -- (b332);
\draw [edge] (a113) -- (a213) -- (a313);
\draw [edge] (a123) -- (a223) -- (a323);
\draw [edge] (a133) -- (a233) -- (a333);
\draw [edge] (a113) -- (a123) -- (a133);
\draw [edge] (a213) -- (a223) -- (a233);
\draw [edge] (a313) -- (a323) -- (a333);
\draw [edge] (a111) -- (a112) -- (a113);
\draw [edge] (a211) -- (a212) -- (a213);
\draw [edge] (a311) -- (a312) -- (a313);
\draw [edge] (a121) -- (a122) -- (a123);
\draw [edge] (a221) -- (a222) -- (a223);
\draw [edge] (a321) -- (a322) -- (a323);
\draw [edge] (a131) -- (a132) -- (a133);
\draw [edge] (a231) -- (a232) -- (a233);
\draw [edge] (a331) -- (a332) -- (a333);
\draw [edge] (a112) -- (a212) -- (a312);
\draw [edge] (a122) -- (a222) -- (a322);
\draw [edge] (a132) -- (a232) -- (a332);
\draw [edge] (a112) -- (a122) -- (a132);
\draw [edge] (a212) -- (a222) -- (a232);
\draw [edge] (a312) -- (a322) -- (a332);
\draw [edge] (a111) -- (a211) -- (a311);
\draw [edge] (a121) -- (a221) -- (a321);
\draw [edge] (a131) -- (a231) -- (a331);
\draw [edge] (a111) -- (a121) -- (a131);
\draw [edge] (a211) -- (a221) -- (a231);
\draw [edge] (b111) -- (b121) -- (b131);
\draw [edge] (b211) -- (b221) -- (b231);
\draw [edge] (b111) -- (b211) -- (b311);
\draw [edge] (b121) -- (b221) -- (b321);
\draw [edge] (b131) -- (b231) -- (b331);
\end{tikzpicture}
}
\caption{The Gray $(27_3)$ configuration $\mathcal{C}_0$ and the corresponding $\mathcal{C}_1$.}
\label{fig:gray}
\end{figure}
Let $\mathcal{C}_0$ be the Gray configuration. By one application of Lemma~14 we obtain a configuration $\mathcal{C}_1$ (see Figure~\ref{fig:gray}(b)) which is point- and line-splittable.
\end{example}

\section{Conclusion}
Theorems 9 and 10, Corollary 8, and our experimental investigations (see periodic behaviour of the last column of Table 1 past $n = 9$) of splittability of cyclic Haar graphs led us to the following conjecture.
\begin{conjecture}
A cyclic $(n_3)$ configuration is unsplittable if and only if its Levi graph belongs to one of the following three infinite families:
\begin{enumerate}
\item $H(n, \{0, 1, 3\})$ for $n \geq 7$;
\item $H(3n, \{0, 1, n\})$ for $n \geq 2$;
\item $H(3n, \{0, 1, n + 1\})$ for $n \geq 4$ where $n \not\equiv 0 \pmod 3$.
\end{enumerate}
\end{conjecture}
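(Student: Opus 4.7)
The plan is to prove both directions separately. The reverse direction (that every graph in the three listed families is unsplittable) extends the arguments of Theorems~9 and~10 to the remaining family~3. The forward direction (that every other cyclic $(n_3)$ configuration is splittable) reduces to a systematic case analysis based on Theorem~\ref{lem:tech}.

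For the reverse direction, family~1 is Theorem~\ref{thm:infunsp} and family~2 is the subsequent theorem, so only family~3 remains. I would handle it in the same spirit: let $X = H(3n,\{0,1,n+1\})$ with $n \geq 4$ and $n \not\equiv 0 \pmod 3$. The edges coming from symbols $0$ and $1$ form a Hamiltonian cycle of length $6n$, while the symbol $n+1$ contributes chords of fixed length in the LCF description. Assuming a splitting set $\Sigma$ exists, its removal breaks the Hamiltonian cycle into paths, and the goal is to show that any two consecutive paths remain joined by a chord or by a short alternating path in $X - \Sigma$. As in Theorem~10, this is done by a small case analysis on which nearby vertices do or do not belong to $\Sigma$, using the independence of $\Sigma$ in $X^2$ to forbid enough collisions. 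The hypothesis $n \not\equiv 0 \pmod 3$ is expected to enter exactly where the bridging path is verified not to close up onto itself; the excluded case $n \equiv 0 \pmod 3$ corresponds, under a suitable isomorphism, to a member of family~2 and therefore is not lost.

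For the forward direction, I would use the classification of cyclic Haar graph isomorphisms of~\cite{KKP} to reduce each trivalent cyclic Haar graph of girth~$6$ to a canonical symbol of the form $\{0,1,r\}$. For such a symbol I would then apply Theorem~\ref{lem:tech} with $a = 1$ and $b = r$: the prescribed set splits $H(n,\{0,1,r\})$ as soon as the twelve elements listed in $\mathcal{W}$ and in $\mathcal{B}$ are all distinct modulo~$n$. Each coincidence translates into a congruence of the form $\alpha r + \beta \equiv 0 \pmod n$ with $|\alpha|,|\beta| \leq 3$; solving this finite list of congruences yields all $(n,r)$ for which the hexagon construction of Figure~\ref{fig:hexGeneral} fails. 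The remaining work is to show, case by case, that each such failing pair either coincides (up to the KKP isomorphisms) with a member of one of the three conjectured families, or admits an alternative splitting set obtained by perturbing the hexagon. The small cases $n \leq 30$ are already tabulated in Tables~\ref{default} and \ref{default22}, so one only needs to treat the asymptotic behaviour of the failure set.

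The main obstacle is precisely this identification step in the forward direction: matching the solutions of the degeneracy congruences with the exact three families of the conjecture modulo the non-trivial equivalence given by the KKP isomorphism classes. Care will be needed to rule out ``accidental'' isomorphisms that might carry a putative splittable degenerate case onto one of the three families, or conversely move a graph apparently in family~3 into a situation where the hexagon construction succeeds; the latter would contradict the reverse direction and hence must be excluded by a finer invariant of the $\{0,1,r\}$ symbol. I expect that a well-chosen combinatorial invariant (for example the multiset of short cycle lengths through a fixed vertex, or the splitting-type-preserving action of the normaliser of the cyclic automorphism) will be enough to separate the three families from all other degenerate symbols.
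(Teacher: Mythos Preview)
The statement you are addressing is stated in the paper as a \emph{conjecture}, not a theorem; the paper offers no proof, only the supporting evidence of Theorems~\ref{thm:infunsp} and~10, Corollary~8, and the computational data in Tables~\ref{tbl:numericsplit}--\ref{default22}, together with the remark that ``we expect that the method used in the proof of Theorem~6, Corollary~7 and~8 can be extended.'' So there is nothing in the paper to compare your argument against beyond that one-sentence suggestion, with which your overall plan is entirely consistent.

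That said, what you have written is a research outline, not a proof, and you are candid about this. Two places deserve flagging. For the reverse direction you dispatch family~3 by saying the argument is ``in the same spirit'' as Theorem~10, but you give no actual chord-tracing; the case analysis for $H(3n,\{0,1,n+1\})$ is not obviously a minor variant of the one for $H(3n,\{0,1,n\})$, and in particular you have not shown where the hypothesis $n\not\equiv 0\pmod 3$ is used, nor verified that the isomorphism you allude to (carrying the excluded case $3\mid n$ onto family~2) really exists under the KKP equivalence. For the forward direction, your reduction to the finite list of congruences $\alpha r+\beta\equiv 0\pmod n$ coming from collisions in $\mathcal W$ and $\mathcal B$ is correct in principle, but the hard part --- which you yourself identify as ``the main obstacle'' --- is entirely unaddressed: for each degenerate $(n,r)$ you must either exhibit an explicit alternative splitting set or prove membership in one of the three families up to KKP isomorphism, and neither task is carried out even in a single nontrivial instance. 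Until those two pieces are supplied, the conjecture remains open and your text is a plan of attack rather than a proof.
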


To show that all other cyclic $(n_3)$ configurations are splittable, we expect that the method used in the proof of Theorem 6, Corollary 7 and 8 can be extended. Nedela and Škoviera~\cite{NS} showed a nice property of cubic graphs with respect to the cyclic connectivity. Their result is likely to have applications in splittablity.

In Section~\ref{sec:sgc} we have shown how to construct geometric point- and line-splittable $(n_k)$ configuration for any $k$.
However, we were not able to obtain any splittable cyclic $(n_k)$ configuration for $k \geq 4$ so far. Therefore, we pose the following claim.
\begin{conjecture}
All cyclic $(n_k)$ configurations for $k \geq 4$ are unsplittable.
\end{conjecture}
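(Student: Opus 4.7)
The plan is to suppose for contradiction that some cyclic $(n_k)$ configuration $\C$ with $k\ge 4$ admits a splitting set $\Sigma$ in its Levi graph $H=H(n,S)$, and to derive a contradiction from the tension between the size forced on $\Sigma$ by being a vertex cut of $H$ and the severe upper bound forced on $|\Sigma|$ by independence in the Gr\"{u}nbaum graph $H^2$.

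First I would record the basic structural restrictions. Since $\Sigma$ is independent in $H^2$, its elements are pairwise at distance at least $3$ in $H$, so the open neighbourhoods $\{N(v):v\in\Sigma\}$ are pairwise disjoint and disjoint from $\Sigma$; counting then yields $|\Sigma|(k+1)\le 2n$. Next, since $H$ is a $k$-regular vertex-transitive bipartite graph of girth at least $6$, I expect $\kappa(H) = k$ and that every minimum vertex cut of $H$ is the open neighbourhood of a single vertex. This should follow by combining Mader's bound $\kappa \ge \lceil 2(k+1)/3 \rceil$ for vertex-transitive graphs with the girth-$6$ condition, and by imitating the classification of minimum atoms for Cayley graphs on abelian groups. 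Granting this, such a neighbourhood $N(v)$ is a clique of size $k\ge 4$ in $H^2$, contradicting independence of $\Sigma$ in $H^2$. Hence every potential splitting set must satisfy $|\Sigma| > k$.

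The second and main step is to rule out strictly larger cuts. Fix a component $A$ of $H-\Sigma$, let $\Sigma_A=\{v\in\Sigma : N(v)\cap A\ne\emptyset\}$ and let $B=N(\Sigma_A)\cap A$. By disjointness of the neighbourhoods of $\Sigma$-vertices, $|B| = k\,|\Sigma_A|$; every vertex of $B$ has exactly one neighbour in $\Sigma_A$ and at most $k-1$ neighbours inside $A$. I would set up a double count relating edges between $B$ and $\Sigma_A$, edges inside $B$, and edges leaving $B$ into $A\setminus B$, and exploit that $|S|=k\ge 4$ produces $k(k-1)\ge 12$ distinct non-zero differences $s-s'$ with $s,s'\in S$. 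Each such difference, transported by the cyclic action, forces an extra length-two path between a specified pair of vertices of $B$, so that the "escape routes" out of $B$ back into $A\setminus B$ are numerous enough to reconnect $A$ to the rest of $H$ through some vertex of $\Sigma$'s complement, contradicting the assumption that $A$ is a proper component.

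The main obstacle is converting this local density advantage into a uniform global argument over all admissible $n$ and symbols $S$. Symbols of special arithmetic type, notably those contained up to translation in a proper subgroup of $\ZZ_n$, can produce unexpectedly small or highly symmetric vertex cuts, and these extremal symbols must be classified and treated separately from the generic case; for such symbols the double-counting above leaves too much slack and has to be refined by hand. A practical first step, which also supplies supporting evidence, is to extend the enumeration of Table~\ref{tbl:numericsplit} to $k\in\{4,5\}$ over a wide range of $n$, both to test the conjecture and to pinpoint the extremal symbols that the generic argument fails to cover; these should then be amenable to an ad hoc case analysis analogous to the one used in the proofs of Theorem~\ref{thm:infunsp} and the subsequent result on $H(3n,\{0,1,n\})$.
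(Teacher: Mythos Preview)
The statement you are attempting to prove is Conjecture~2 in the paper, not a theorem: the authors explicitly write that they ``were not able to obtain any splittable cyclic $(n_k)$ configuration for $k\ge 4$ so far'' and pose the claim as an open problem. There is therefore no proof in the paper to compare against; any correct argument you produce would settle an open question.

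Your proposal, however, is not a proof but a programme with two substantial gaps that you yourself flag. First, the assertion that every minimum vertex cut of $H(n,S)$ is the open neighbourhood of a single vertex is not established. Mader's bound only gives $\kappa(H)\ge\lceil 2(k+1)/3\rceil$, which for $k=4$ already fails to pin $\kappa$ to $k$ without further work; and the atom theory you invoke is developed for Cayley graphs on abelian groups, whereas cyclic Haar graphs are bi-Cayley graphs and need not be Cayley at all, so the transfer you ``expect'' requires a separate argument. Even granting $\kappa(H)=k$, the classification of minimum cuts is a genuine theorem to be proved, not a consequence of girth~$6$ alone.

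Second, your ``main step'' for $|\Sigma|>k$ is only a heuristic. The double count you describe does not, as written, force reconnection: knowing that vertices of $B$ have many length-two paths among themselves says nothing about paths that avoid $\Sigma$ and reach the other components of $H-\Sigma$. You acknowledge this when you call the conversion to a global argument ``the main obstacle'' and propose computer experiments and ad hoc case analyses for extremal symbols. That is a reasonable research plan, but it is precisely the content of the conjecture, not a proof of it.
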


Notions of splittable and unsplittable configurations have been defined via associated graphs. Since splittability is a property
of combinatorial configurations, it can be extended from bipartite graphs of girth at least 6 to more general graphs.
We expect that results concerning cyclic connectivity such as those presented in \cite{NS} will play an important
role in such investigations.

Note that cyclic Haar graphs have girth at most 6 and form a special class of bicirculants \cite{tomo2007}.
However, there exist other bicirculants with girth greater than 6. The corresponding configurations have been
investigated in \cite{BPZ2005,BGPZ2006}. One way of extending this study is on the one hand to consider splittability of
these more general bicirculants and on the other hand to study tricirculants~\cite{kkmw2012}, tetracirculants and beyond \cite{Fre2013}.
In the language of configurations, they can be described as special classes of polycyclic configurations \cite{boben2003}.

\section{Acknowledgement}
The work was 
supported in part by a grant from the Picker Institute at Colgate University and the ARRS of Slovenia, grants P1-0294, N1-0011, N1-0032, J1-6720, and J1-7051. We would like to thank Istv{\'a}n Est{\'e}lyi and Jaka Kranjc for fruitful discussions during the preparation of this paper and the two anonymous referees for useful remarks and suggestions that have improved the quality of the paper.

\bibliography{splittable}
\bibliographystyle{amcjoucc}

\end{document}